\newtheorem{theorem}{Theorem}[section]
\newtheorem{corollary}[theorem]{Corollary}
\newtheorem{proposition}[theorem]{Proposition}
\newtheorem{definition}[theorem]{Definition}
\newtheorem{remark}[theorem]{Remark}
\newtheorem{hyp}{Hypotheses}{\rm}
\newcommand{\R}{{\mathbb R}}
\newcommand{\N}{{\mathbb N}}
\newcommand{\Om}{{\Omega}}
\newcommand{\F}{\mathcal F}
\newcommand{\ve}{\varepsilon}
\newcommand{\eps}{\varepsilon}
\newcommand{\ra}{\rightarrow}
\renewcommand{\tilde}[1]{\widetilde{#1}}
\newcommand{\set}[1]{{\left\{#1\right\}}}
\newcommand{\pa}[1]{{\left(#1\right)}}
\newcommand{\gen}[1]{{\left\langle #1\right\rangle}}
\newcommand{\abs}[1]{{\left|#1\right|}}
\newcommand{\norm}[1]{{\left\|#1\right\|}}
\newcommand{\eqsys}[1]{{\left\{\begin{array}{ll}#1\end{array}\right.}}
\newcommand{\tc}{\, \middle |\,}
\title{On functions of bounded variation on convex domains in Hilbert spaces
\thanks{The authors are members of G.N.A.M.P.A. of the Italian Istituto Nazionale di Alta Matematica (INdAM).
Work partially supported by the INdAM-GNAMPA Project 2019 ``Metodi analitici per lo studio di PDE e problemi collegati in 
dimensione infinita'' and by the Research project PRIN 2015 MIUR 2015233N5A ``Deterministic and stochastic evolution 
equations" .}
}
\author{L. Angiuli\thanks{Dipartimento di Matematica e Fisica ``Ennio De Giorgi", Universit\`a del Salento. 
POB 193, 73100 Lecce, Italy; e-mail luciana.angiuli}, 
S. Ferrari\thanks{Dipartimento di Scienze Matematiche, Fisiche e Informatiche, 
Universit\`a di Parma, Parco Area delle Scienze 53/A, 43124 Parma, Italy; 
e-mail simone.ferrari1@unipr.it}  \, and 
D. Pallara\thanks{Dipartimento di Matematica e Fisica ``Ennio De Giorgi", Universit\`a del Salento, and INFN, 
Sezione di Lecce. POB 193, 73100 Lecce, Italy; e-mail diego.pallara@unisalento.it}}
\date{}
\begin{document}

\maketitle

\begin{abstract}
We study functions of bounded variation (and sets of finite perimeter) on a convex open set $\Om\subseteq X$, $X$ being 
an infinite dimensional real Hilbert space. We relate the total variation of such functions, defined through an integration 
by parts formula, to the short-time behaviour of the semigroup associated with a perturbation of the 
Ornstein-Uhlenbeck operator.
\end{abstract}


\section*{Introduction}
In this paper we study some properties of functions of bounded variation ($BV$ functions, for short) defined on an open convex subset of a real separable Hilbert space, endowed with a weighted Gaussian measure.

In finite dimension the theory of $BV$ functions is widely developed (see e.g. \cite{AFP00} and the references therein), whereas in the infinite dimensional setting the analysis is still at the initial stage and many basic properties are  unexplored. Besides the interest on its own, the study of $BV$ functions in infinite dimensional spaces is motivated by problems arising in calculus of variations, stochastic analysis and connected with the applications in information technology (see, for example, \cite{HinHuc,Don,Roc2Zhu,Tre,Zam}).

$BV$ functions for Gaussian measures in separable Banach spaces were introduced in \cite{Fuk} using
Dirichlet forms. Inspired by the results in finite dimension, which connect the theory of functions of bounded variation to that of semigroups of bounded operators, the authors  of \cite{FukHin} have proved an elegant characterisation of $BV$ functions in terms of the short-time behaviour of the Ornstein--Uhlenbeck semigroup. More precisely, in a separable Banach space $X$, if $\gamma$ is a centered and nondegenerate Gaussian measure on $X$ and $u$ belongs to the Orlicz space $L(\log L)^{1/2}(X,\gamma)$, then $u\in BV(X, \gamma)$ if, and only if,
\begin{equation*}
\liminf_{t \to 0^+}\int_X |D_HS(t)u|_Hd\gamma<+\infty ,
\end{equation*}
where $D_H$ is the gradient operator along the Cameron--Martin space $H$ (see Section \ref{prel}) and $S(t)$ is the classical Ornstein--Uhlenbeck semigroup defined via the Mehler formula (see \eqref{Mehler}). This latter is the analogous, in the Gaussian setting, of the heat semigroup used by De Giorgi in \cite{Deg} to provide the original definition of $BV$ functions in the Euclidean case. An analytic approach based on geometric measure theory is proposed in \cite{AMMP} to prove, as in the finite dimensional case, the equivalence of different definitions of $BV(X,\gamma)$ functions also, as in \cite{FukHin}, in terms of the Ornstein--Uhlenbeck semigroup $S(t)$ near $t=0$. Similar De Giorgi-type characterisations of $BV$ functions have been obtained for weighted Gaussian measures and more recently for general Fomin differentiable measures in Hilbert spaces, see \cite{DPL18} and the reference therein.

Beside the difficulty of considering general measures, another difficulty of different nature comes from the consideration of functions defined in domains rather than in the whole space. These difficulties come from the lack of factorisation of the underlying measure (that is lost even for Gaussian measures
in domains) and the unavailability of decomposition of the domain through the classical method of local charts. Therefore,
the easiest interesting case seems to be that of {\em convex} domains, that are possible to deal with through {\em global}
penalisation techniques. This is the approach we followed in \cite{AFP18} (see also \cite{LMP})
and in this paper we take advantage of the results proved there. We start from a weighted Gaussian measure 
$\nu:=e^{-U}\gamma$ in a Hilbert space $X$, where $U:X\to\R$ is convex
and sufficiently regular, and consider an open convex domain $\Omega\subseteq X$. After introducing the Cameron--Martin space
$H$ and the Malliavin gradient $D_H$ along it, we define the form
$(u,v)\mapsto \int_\Omega\langle D_Hu,D_Hv\rangle_Hd\nu$
on the appropriate Sobolev spaces. The perturbed Ornstein--Uhlenbeck operator $L_\Omega$ is then defined in the usual
variational way, and it is the generator of an analytic, strongly continuous and contraction semigroup $T_\Omega(t)$ in
$L^p(\Omega,\nu)$, for $1<p<\infty$.

For this latter, differently from the Ornstein--Uhlenbeck semigroup in the whole space, 
no explicit integral representation which allows for direct computations is known.
In this direction, in \cite{LMP} the authors consider the restrictions to an open convex set $\Om\subseteq X$ of
$BV(X,\gamma)$ functions and they characterise the finiteness of their total variation in $\Om$ in terms of the
Neumann Ornstein--Uhlenbeck semigroup defined in $\Om$.

Following the ideas in \cite{AM}, we define the $BV(\Omega,\nu)$ space through an integration by parts formula against
suitable Lipschitz functions. Then we show that the functions $u$ of bounded variation in $\Om$ with respect to $\nu$ can be
characterised by the finiteness of the limit of
$\|D_HT_\Omega(t)u\|_{L^1(\Omega,\nu;H)}$ as $t\to 0^+$. The proof of this result relies on a commutation formula between the semigroup $T_\Om(t)$ and the gradient operator along $H$ (see Proposition \ref{prop-comm}). This result was already known in the case of the whole space (see \cite{DPL18}). Here, by means of the crucial pointwise gradient estimate \eqref{gra-est1} and suitable penalisations $\Phi_\varepsilon$ of $U$ outside $\Omega$ based on the {\em distance function from $\Omega$ along $H$} (here is a first point where the convexity of $\Omega$ comes into the play) and the penalisation $\nu_\varepsilon:=e^{-\Phi_\varepsilon}\gamma$ of the measure $\nu$, see Subsection \ref{subsect_sem}, we are able to let $\varepsilon$ to $0^+$ and to to come back to $\Omega$.

Finally we provide a necessary condition in order that a set $E$ is of finite perimeter in $\Om$ with respect to $\nu$ (i.e.,
$\chi_E\in BV(\Omega,\nu)$). This condition is given in terms of the short-time behaviour of the 
{\em Ornstein--Uhlenbeck content} $\|T_\Omega(t)\chi_E-\chi_E\|_{L^1(\Omega,\nu)}$ as $t\to 0^+$. 
Further, a sufficient condition in terms of a related quantity is also shown. and a sufficient condition in terms of a related quantity.
This circle of ideas goes back to \cite{Led94Sem}, which originated several researches. Among these,
the only infinite dimensional result, proved for $BV$ functions in space endowed with a Gaussian measure, is
in \cite{AMP15AGMS}.

\makeatletter \@addtoreset{equation}{subsection}
\def\theequation{\thesection.\arabic{equation}}
\makeatother \makeatletter

\section{Hypotheses and preliminaries}\label{prel}

Let $H_1$ and $H_2$ be two real Hilbert spaces with inner products $\gen{\cdot,\cdot}_{H_1}$ and $\gen{\cdot,\cdot}_{H_2}$
respectively. We denote by $\mathcal{B}(H_1)$ the $\sigma$-algebra of Borel subsets of $H_1$ and by
$C^k_b(H_1;H_2)$, $k\in \N\cup\{\infty\}$ the set of $k$-times Fr\'echet differentiable functions from $H_1$ to $H_2$ with bounded derivatives up to order $k$ ($C_b^k(H_1)$ if $H_2=\R$). For $\Phi\in C_b^1(H_1;H_2)$ we denote by $\mathcal{D} \Phi(x)$ the derivative of $\Phi$ at $x\in H_1$: if $f\in C_b^1(H_1)$, for every $x\in H_1$ there exists a unique $k\in H_1$ such that
$\mathcal{D} f(x)(h)=\gen{h,k}_{H_1}$, $h\in H_1$ and we set $D f(x):=k$.
Let $X$ be a separable Hilbert space, with inner product $\langle\cdot,\cdot\rangle$ and norm $|\cdot|$.
Let $B\in\mathcal{L}(X)$ (the set of bounded linear operators from $X$ to itself). We say that $B$ is \emph{non-negative}   
if $\gen{Bx,x}\geq 0$ for every $x\in X$ and \emph{positive}
if $\gen{Bx,x}>0$ for every $x\in X\setminus\set{0}$. We recall that a non-negative and self-adjoint operator 
$B\in\mathcal{L}(X)$ is a trace class operator whenever ${\rm Tr}(B):=\sum_{n=1}^{\infty}\langle Be_n,e_n\rangle<\infty$
for some (and hence, every) orthonormal basis $(e_n)_{n\in\N}$ of $X$.

Let $\gamma$ be a nondegenerate Gaussian measure on $X$ with mean zero and covariance operator $Q_\infty:=-QA^{-1}$, where the operators $Q$ and $A$ satisfy the following assumptions.

\begin{hyp}\label{hyp_base}
\begin{enumerate}[\rm(i)]
\item $Q\in \mathcal{L}(X)$ is a self-adjoint and non-negative operator with ${\rm Ker}\, Q=\{0\}$;\label{hyp_base_1}
\item $A:D(A)\subseteq X\ra X$ is a self-adjoint operator satisfying $\gen{Ax,x}\leq -\omega\abs{x}^2$ for every $x\in D(A)$ and some positive $\omega$;
\item $Qe^{tA}=e^{tA}Q$ for any $t\geq 0$;\label{hyp_base_3}
\item ${\rm Tr}(-QA^{-1})<\infty$.\label{hyp_base_4}
\end{enumerate}
\end{hyp}
\noindent
Under Hypotheses \ref{hyp_base}\eqref{hyp_base_1}-\eqref{hyp_base_3}, the measure $\gamma$ is well defined and the Ornstein--Uhlenbeck semigroup defined via the Mehler formula
\begin{align}\label{Mehler}
(S(t)f)(x):=\int_Xf(e^{-t}x+\sqrt{1-e^{-2t}}y)d\gamma(y),\qquad x\in X,\,f\in L^1(X,\gamma),
\end{align}
is symmetric in $L^2(X,\gamma)$. We fix an orthonormal basis $(v_k)_{k\in\N}$ of $X$ such that
\begin{equation}\label{qinfty}
Q_\infty v_k=\lambda_k v_k,\qquad k\in\N,
\end{equation}
where $(\lambda_k)_{k\in\N}$ is the decreasing sequence of eigenvalues of $Q_\infty$. Under Hypothesis \ref{hyp_base}\eqref{hyp_base_4}, the Cameron--Martin $(H, \abs{\cdot}_H)$
\[
H:=Q_\infty^{1/2}(X)=\Bigl\{x\in X\Big| \sum_{k=1}^{\infty}\lambda_k^{-1}\gen{x,v_k}^2<\infty\Bigr\},
\]
where $\abs{\cdot}_H$ is induced by the inner product
$\gen{h,k}_H:=\langle Q_\infty ^{-1/2}h,Q_\infty ^{-1/2}k\rangle$,
is a Hilbert space compactly and densely embedded in $X$ (see \cite{Bog} and \cite{DA-ZA1} for further details).
The sequence $(e_k)_{k\in \N}$, where
$e_k=\sqrt{\lambda_k}v_k$ for any $k\in \N$, is an orthonormal basis of $H$.
By Hypotheses \ref{hyp_base}, the operator $-Q^{-1}_\infty:D(Q^{-1}_\infty)\subseteq X\ra X$
($-Q^{-1}_\infty:D(Q^{-1}_\infty)\subseteq H\ra H$, respectively) is the generator of a contractive and strongly continuous
semigroup $e^{-tQ_\infty^{-1}}$ on $X$ (on $H$, respectively), see \cite[Proposition p. 84]{EN1}).
If $Y$ is a Banach space with norm $\norm{\cdot}_Y$, a function $F:X\ra Y$ is said to be $H$-Lipschitz continuous if
there exists a positive constant $C$ such that
\begin{gather}\label{costante di H-lip}
\norm{F(x+h)-F(x)}_Y\leq C\abs{h}_H,
\end{gather}
for every $h\in H$ and $\gamma$-a.e. $x\in X$. We denote by $[F]_{H\text{-Lip}}$ the best constant $C$ in
\eqref{costante di H-lip}. For more information see \cite[Sections 4.5 and 5.11]{Bog}.
We denote by $\mathcal{H}_2$ the space of the Hilbert--Schmidt operators in $H$, that is the space of the bounded linear operators $B:H\ra H$ such that $\norm{B}_{\mathcal{H}_2}^2:=\sum_{i=1}^{\infty}\abs{Bg_i}^2_H$
is finite, where  $\{g_n\,|\,n\in\N\}$ is any orthonormal basis of $H$. We say that
$f:X\rightarrow\R$ is $H$-differentiable at $x_0\in X$ if there exists $\ell\in H$ such that
\[
f(x_0+h)=f(x_0)+\gen{\ell,h}_H+o(|h|_H),\qquad\text{as $|h|_H\rightarrow0$.}
\]
In such a case we set $D_H f(x_0):=\ell$ and $D_i f(x_0):= \langle D_H f(x_0), e_i\rangle_H$ for any $i\in \N$.
The derivative $D_H f(x_0)$ is called the \emph{Malliavin derivative} of $f$ at $x_0$. In a similar way we say that $f$
is twice $H$-differentiable at $x_0$  if $f$  is $H$-differentiable near $x_0$ and there exists $\mathcal B\in \mathcal{H}_2$
such that
\[
f(x_0+h)=f(x_0)+\gen{D_H f(x_0),h}_H+\frac{1}{2}\langle \mathcal B h,h\rangle_H+o(|h|^2_H),\qquad
\text{as $|h|_H\rightarrow0$.}
\]
In such a case we set $D^2_H f(x_0):=\mathcal B$ and $D_{ij} f(x_0):= \langle D^2_H f(x_0)e_j, e_i\rangle_H$ for any
$i,j\in \N$. If $f$ is twice $H$-differentiable at $x_0$, then $D_{ij}f(x_0)=D_{ji}f(x_0)$ for every
$i,j\in\N$. Notice that if $f:X\rightarrow\R$ is once or twice Fr\'echet differentiable at $x_0$ then it is 
once or twice $H$-differentiable at $x_0$ and it holds $D_Hf(x_0)=Q_{\infty}Df(x_0)$,
and $D^2_Hf(x_0)=Q_{\infty}D^2 f(x_0)Q_{\infty}$, where the equality must be understood as holding in $H$.
For any $k\in\N\cup\set{\infty}$, we denote by $\mathcal{F}C_b^k(X)$, the space of cylindrical $C^k_b$ functions, i.e.,
the set of functions $f:X\to \R$ such that $f(x)=\varphi(\langle x, h_1\rangle,\ldots, \langle x, h_N\rangle)$ for
some $\varphi \in C_b^k(\R^N)$, $h_1,\ldots, h_N\in H$ and $N\in\N$.
By $\mathcal{F}C_b^k(X,H)$ we denote $H$-valued cylindrical $C_b^k$ functions with finite rank.
The Sobolev spaces in the sense of Malliavin $D^{1,p}(X,\gamma)$ and $D^{2,p}(X,\gamma)$ with $p\in[1,\infty)$, are defined as the completions of the \emph{smooth cylindrical functions} $\mathcal{F}C_b^\infty(X)$ in the norms
\begin{gather*}
\norm{f}_{D^{1,p}(X,\gamma)}:=
\Bigl(\norm{f}^p_{L^p(X,\gamma)}+\int_X\abs{D_H f}_H^pd\gamma\Bigr)^{\frac{1}{p}};
\\
\norm{f}_{D^{2,p}(X,\gamma)}:=
\Bigl(\norm{f}^p_{D^{1,p}(X,\gamma)}+\int_X\|D_H^2 f\|^p_{\mathcal{H}_2}d\gamma\Bigr)^{\frac{1}{p}}.
\end{gather*}
This is equivalent to considering the domain of the closure of the gradient operator, defined on smooth cylindrical functions, in $L^p(X,\gamma)$ (see \cite[Section 5.2]{Bog}). Let $U:X\ra\R$ satisfy the following assumptions.
\begin{hyp}\label{ipotesi peso}
$U$ is a convex function which belongs to $C^2(X)\cap D^{1,q}(X,\gamma)$ for all $q\in[1,\infty)$ with
$H$-Lipschitz gradient.
\end{hyp}
The convexity of the function $U$ guarantees that $U$ is bounded from below by a linear function, therefore it decreases at most linearly and by Fernique theorem (see \cite[Theorem 2.8.5]{Bog}) $e^{-U}$ belongs to $L^1(X,\gamma)$. Then we can consider the finite log-concave measure
\begin{equation*}
\nu:= e^{-U}\gamma.
\end{equation*}
It is obvious that $\gamma$ and $\nu$ are equivalent measures, hence saying that a statement holds $\gamma$-a.e. is the same as saying that it holds $\nu$-a.e.
Moreover as $U\in \cap_{q\geq 1}D^{1,q}(X,\gamma)$, the operator $D_H:\mathcal{F}C^1_b(X)\ra L^p(X,\nu;H)$ is closable
in $L^p(X,\nu)$, $p\in(1,\infty)$ and the space $D^{1,p}(X,\nu)$, $p>1$ can be defined as the domain of its closure
(still denoted by $D_H$). In a similar way we may define $D^{2,p}(X,\nu)$, $p\in(1,\infty)$ (for more details see
\cite{AD-CA-FE1,CF16,Fer15}). The Gaussian integration by parts formula
$\int_X D_i f d\gamma=\frac{1}{\sqrt{\lambda_i}}\int_X \langle x,v_i\rangle f d\gamma$, which holds true for any
$f\in \mathcal{F}C^1_b(X)$ and $i\in\N$, yields
\begin{gather*}
\int_X \psi D_i \varphi d\nu+\int_X\varphi D_i\psi d\nu=\int_X \varphi\psi D_i U d\nu+\frac{1}{\sqrt{\lambda_i}}
\int_X \langle x, v_i\rangle\varphi\psi d\nu,\qquad\;\,i \in \N,
\end{gather*}
for any $\varphi\in D^{1,p}(X,\nu)$ ($p>1$) and $\psi \in \mathcal{F}C^1_b(X)$.

In what follows $\Omega$ denotes an open subset of $X$. In this case, the spaces $D^{1,p}(\Omega,\nu)$ and
$D^{2,p}(\Omega,\nu)$, $p\in(1,\infty)$, can be defined in a similar way as in the whole space, thanks to the following
result (see \cite[Proposition 1.4]{AFP18}).

\begin{proposition}
Assume that Hypotheses $\ref{hyp_base}$ and $\ref{ipotesi peso}$ are satisfied. Let $p\in(1,\infty)$ and let
$\Om$ be an open subset of $X$. The operators $D_H:\mathcal{F}C_b^\infty(\Omega)\ra L^p(\Omega,\nu; H)$ and
\begin{gather*}
(D_H,D_H^2):\mathcal{F}C_b^\infty(\Omega)\times \mathcal{F}C_b^\infty(\Omega)\ra
L^p(\Omega,\nu; H)\times L^p(\Omega,\nu; \mathcal{H}_2)
\end{gather*}
are closable in $L^p(\Omega,\nu)$ and $L^p(\Omega,\nu)\times L^p(\Omega,\nu)$, respectively. Here $\mathcal{F}C_b^\infty(\Omega)$ is the space of the restrictions to $\Omega$ of functions in $\mathcal{F}C_b^\infty(X)$.
\end{proposition}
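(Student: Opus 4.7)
My plan is to prove closability by directional integration by parts along the eigen-axes of $Q_\infty$, replacing the unavailable cylindrical cutoffs supported inside $\Omega$ with one-dimensional cutoffs on slices. I would fix $i\in\N$ and decompose $X=\R v_i\oplus V_i^\perp$, so that $\gamma=\gamma_{1,i}\otimes\gamma_i'$ with $\gamma_{1,i}$ a centred Gaussian of variance $\lambda_i$ on $\R v_i$. For $y\in V_i^\perp$ the slice $\Omega_y:=\{s\in\R:sv_i+y\in\Omega\}$ is open in $\R$, and the restriction of any $f\in\mathcal{F}C_b^\infty(\Omega)$ to the affine line $\R v_i+y$ is smooth on $\Omega_y$: this is the structural observation I would rely on.

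Given $(f_n)\subset\mathcal{F}C_b^\infty(\Omega)$ with $f_n\to 0$ in $L^p(\Omega,\nu)$ and $D_Hf_n\to G$ in $L^p(\Omega,\nu;H)$, I would test against functions $\psi:\Omega\to\R$ for which $s\mapsto\psi(sv_i+y)$ lies in $C_c^\infty(\Omega_y)$ for $\gamma_i'$-a.e.\ $y$. A one-dimensional integration by parts along the $v_i$-axis (boundary-term free thanks to the compact slice support) combined with Fubini should yield
\begin{equation*}
\int_\Omega (D_if_n)\,\psi\,d\nu=-\int_\Omega f_n\,(D_i\psi)\,d\nu+\int_\Omega f_n\,\psi\,(D_iU)\,d\nu+\frac{1}{\sqrt{\lambda_i}}\int_\Omega f_n\,\psi\,\langle x,v_i\rangle\,d\nu,
\end{equation*}
namely the localization to $\Omega$ of the IBP formula displayed in the excerpt. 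Sending $n\to\infty$, the left-hand side converges to $\int_\Omega G_i\psi\,d\nu$, while the right-hand side vanishes: $f_n\to 0$ in $L^p(\Omega,\nu)$ and each of $D_i\psi$, $\psi D_iU$, $\psi\langle\cdot,v_i\rangle$ lies in $L^{p'}(\Omega,\nu)$ by the smoothness and compact slice support of $\psi$, by Hypothesis~\ref{ipotesi peso} (whence $D_iU\in L^q(X,\gamma)$ for all $q$), and by the Gaussian integrability of $\langle\cdot,v_i\rangle$. A Fubini-based density argument will then show that such $\psi$'s are dense in $L^{p'}(\Omega,\nu)$, using that $C_c^\infty(\Omega_y)$ is dense in $L^{p'}(\Omega_y,\nu_y)$ on each slice. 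Hence $G_i=0$ $\nu$-a.e.\ for every $i\in\N$, and therefore $G=0$.

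For the joint closability of $(D_H,D_H^2)$ on $\mathcal{F}C_b^\infty(\Omega)\times\mathcal{F}C_b^\infty(\Omega)$, the product-space statement decouples into separate closability of $D_H$ (just established) and of $D_H^2$. I would handle the latter by iterating the slice-wise IBP: if $g_n\to 0$ in $L^p(\Omega,\nu)$ and $D_H^2g_n\to B$ in $L^p(\Omega,\nu;\mathcal{H}_2)$, two consecutive integrations by parts (first along $v_j$, then along $v_i$) transfer both derivatives from $g_n$ onto the test function, producing drift terms involving $D_iU$, $D_{ij}U$ and first-order Gaussian polynomials. The $H$-Lipschitz continuity of $D_HU$ (Hypothesis~\ref{ipotesi peso}) gives $D_{ij}U\in L^\infty(X)$, so every drift factor lies in $L^{p'}(\Omega,\nu)$; passing to the limit forces each $B_{ij}$ to vanish against a dense class of test functions, hence $B=0$.

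\emph{Main obstacle.} The chief difficulty is that no smooth cylindrical cutoffs with support inside an arbitrary open $\Omega$ are available in infinite dimensions, so one cannot reduce to the whole-space closability by a straightforward multiplication-and-localization argument. The slice-wise IBP circumvents this by exploiting the product structure of $\gamma$ along the eigen-directions of $Q_\infty$, reducing the question to one-dimensional IBP, where compactly supported cutoffs always exist. The delicate residual point is the density and measurability in $y$ of the slice-compact test function class in $L^{p'}(\Omega,\nu)$, which should follow from standard Carath\'eodory-style selection once the slice-wise density is granted.
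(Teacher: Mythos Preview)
The paper does not prove this proposition; it is quoted from \cite[Proposition~1.4]{AFP18}, so there is no in-paper argument to compare against. Your slice-wise integration-by-parts along the $Q_\infty$-eigen-directions is the standard route and is essentially correct.

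The one genuine soft spot is the density step you flag. You can sidestep it entirely: from $f_n\to 0$ and $D_if_n\to G_i$ in $L^p(\Omega,\nu)$, Fubini yields a subsequence along which, for $\gamma_i'$-a.e.\ $y\in V_i^\perp$, both $f_n(\cdot\,v_i+y)\to 0$ and $(D_if_n)(\cdot\,v_i+y)\to G_i(\cdot\,v_i+y)$ in $L^p\bigl(\Omega_y,\,e^{-U(sv_i+y)}d\gamma_{1,i}(s)\bigr)$. On each such slice the problem is one-dimensional closability with a smooth positive weight: the $f_n$ restrict to $C^\infty(\R)$ functions, and testing against an arbitrary $\chi\in C_c^\infty(\Omega_y)$ immediately gives $G_i(\cdot\,v_i+y)=0$ a.e.\ on $\Omega_y$, without any measurable-selection or global-density argument.

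For $(D_H,D_H^2)$: read literally, the product domain makes the statement decouple as you say. The intended reading (the one needed to define $D^{2,p}(\Omega,\nu)$) is $f\mapsto(D_Hf,D_H^2f)$ on a single copy of $\mathcal{F}C_b^\infty(\Omega)$, closable in $L^p(\Omega,\nu)$; under that reading the hypothesis already includes $D_Hf_n\to G=0$, so after one IBP along $e_j$ you get $\int_\Omega(D_{ij}f_n)\psi\,d\nu=\int_\Omega(D_if_n)\,\partial_j^*\psi\,d\nu\to 0$ since $\partial_j^*\psi\in L^{p'}(\Omega,\nu)$. Your double IBP is also fine and in fact yields the stronger decoupled claim; the $H$-Lipschitz bound on $D_HU$ indeed gives $D_{ij}U\in L^\infty$, so all drift terms land in $L^{p'}$.
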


\noindent The spaces $D^{1,p}(\Omega,\nu;H)$, $p\in(1,\infty)$, are defined in a similar way, replacing smooth cylindrical functions with $H$-valued smooth cylindrical functions with finite rank. We recall that if $F\in D^{1,p}(\Omega,\nu;H)$, then $D_H F(x)$ belongs to $\mathcal{H}_2$ for a.e. $x\in \Om$. We denote by $p'$ the conjugate exponent to $p\in (1,\infty)$.

\subsection{Perturbed Ornstein--Uhlenbeck semigroup on convex domains}\label{subsect_sem}

In order to consider the initial boundary value problems defined in $\Om$ we define
the \emph{distance function along $H$}
\begin{gather*}
d_\Om(x):=\left\{\begin{array}{lc}
\inf\{|h|_H\,|\, h\in H\cap (\Omega-x)\}, \quad \quad& H\cap (\Omega-x)\neq\emptyset;\\
\infty, & H\cap (\Omega-x)=\emptyset,
\end{array}\right.
\end{gather*}
$x\in X$, and we recall some useful regularity results, (see, for instance, \cite[Theorems 2.8.5 and 5.11.2]{Bog} and
\cite[Section 3]{CF18}).

\begin{proposition}\label{prop_dist}
If $\Omega\subseteq X$ be an open convex set, then $d_\Omega^2$ is $H$-differentiable and its Malliavin derivative is $H$-Lipschitz with $H$-Lipschitz constant less than or equal to $2$, i.e.,
\[|D_Hd_\Om^2(x+h)-D_Hd_{\Om}^2(x)|_H\le 2 |h|_H,\]
for any $h\in H$ and for $\nu$-a.e $x\in X$. Moreover $D_H^2d_\Omega^2$ exists $\nu$-a.e. in $X$ and $d_\Omega^2$ belongs to $D^{2,p}(X,\nu)$ for every $p\in[1,\infty)$.
\end{proposition}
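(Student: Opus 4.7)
The plan is to reduce everything to classical convex analysis on the Cameron--Martin space $H$, and to read off the measure-theoretic statements from Gaussian Sobolev theory. The pivotal observation is that, for every $x\in X$, the set $C_x:=H\cap(\Omega-x)$ is a non-empty convex subset of $H$: indeed $\Omega-x$ is open in $X$, $H$ is dense in $X$, so every non-empty $X$-open set meets $H$. A translation argument then shows that for $k\in H$,
\[
d_\Omega(x+k)=\inf\{|h-k|_H : h\in C_x\}=d_H(k,\overline{C_x}^{H}),
\]
where $\overline{C_x}^{H}$ is the closure of $C_x$ in the $H$-topology. Hence $k\mapsto d_\Omega^2(x+k)$, viewed as a function on $H$, coincides with $\Phi_x(k):=d_H(k,\overline{C_x}^{H})^2$, the squared $H$-distance to a closed convex subset of a Hilbert space.

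I would then invoke the standard fact that, for any non-empty closed convex $C\subseteq H$, the function $\Phi(k):=d_H(k,C)^2$ is of class $C^{1,1}$ on $H$ with $\nabla_H\Phi(k)=2(k-P_C k)$, where $P_C$ is the metric projection, and with $\nabla_H\Phi$ Lipschitz of constant $2$; this last point is a short computation using that $P_C$ is $1$-Lipschitz and monotone. Applying this with $C=\overline{C_x}^{H}$ at $k=0$ gives the $H$-differentiability of $d_\Omega^2$ at $x$ with $D_H d_\Omega^2(x)=-2 P_{\overline{C_x}^{H}}(0)$. For the $H$-Lipschitz bound on $D_H d_\Omega^2$, I would exploit the identity $\overline{C_{x+h}}^{H}=\overline{C_x}^{H}-h$ valid for $h\in H$, which yields $D_H d_\Omega^2(x+h)=2(h-P_{\overline{C_x}^{H}}(h))=\nabla_H\Phi_x(h)$; the required estimate $|D_H d_\Omega^2(x+h)-D_H d_\Omega^2(x)|_H\le 2|h|_H$ is then exactly the $2$-Lipschitz property of $\nabla_H\Phi_x$.

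For the membership $d_\Omega^2\in D^{2,p}(X,\nu)$ and the pointwise $\nu$-a.e.\ existence of $D_H^2 d_\Omega^2$, my plan is to observe that the map $x\mapsto D_H d_\Omega^2(x)\in H$ is itself globally $H$-Lipschitz with constant $2$ (by the previous step), and then to appeal to Gaussian Sobolev theory for $H$-Lipschitz $H$-valued maps (the $H$-valued counterpart of \cite[Theorem 5.11.2]{Bog}), which places such maps in $D^{1,p}(X,\gamma;H)$ for every $p\in[1,\infty)$, whence $d_\Omega^2\in D^{2,p}(X,\gamma)$. The transfer from $\gamma$ to $\nu=e^{-U}\gamma$ is immediate using the integrability of all powers of $e^{-U}$ guaranteed by convexity of $U$ together with Fernique's theorem. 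For the pointwise $\nu$-a.e.\ existence of $D_H^2 d_\Omega^2$ in the sense of the preliminaries, I would invoke a Rademacher-type theorem in the Gaussian setting as in \cite[Section 3]{CF18}. Integrability of $d_\Omega^2$ itself follows from the $1$-Lipschitz bound $|d_\Omega(x+h)-d_\Omega(x)|\le|h|_H$ together with Fernique's theorem, once an arbitrary reference point is fixed to bound $d_\Omega$ from above.

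The main obstacle is the last step: in infinite dimensions the classical Rademacher/Alexandrov machinery is unavailable, and the Gaussian (or log-concave $\nu$) structure is genuinely needed to make sense of the pointwise a.e.\ existence of second Malliavin derivatives of Lipschitz-gradient functions. By contrast, the geometric/analytic core of the argument, namely the reduction to the squared $H$-distance to a closed convex subset of a Hilbert space, produces both the $H$-differentiability and the sharp Lipschitz constant $2$ cleanly, with the convexity of $\Omega$ entering only through the convexity of $C_x$.
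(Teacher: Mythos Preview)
Your argument is correct. Note, however, that the paper does not actually prove this proposition: it simply records the statement and refers the reader to \cite[Theorems~2.8.5 and~5.11.2]{Bog} and \cite[Section~3]{CF18}. Your reduction---identifying $k\mapsto d_\Omega^2(x+k)$ with the squared $H$-distance to the closed convex set $\overline{C_x}^H$, reading off $D_Hd_\Omega^2(x)=2(0-P_{\overline{C_x}^H}(0))$ from the classical formula $\nabla_H\Phi=2(I-P_C)$, and exploiting the translation identity $C_{x+h}=C_x-h$ for $h\in H$---is precisely the standard route that those references follow, so there is no substantive difference to report.

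One small precision worth making explicit: to get the Lipschitz constant $2$ for $\nabla_H\Phi=2(I-P_C)$ it is not quite enough that $P_C$ be $1$-Lipschitz and monotone; what is used is that $P_C$ is \emph{firmly non-expansive}, i.e., $\langle P_Ck-P_Ck',k-k'\rangle_H\ge|P_Ck-P_Ck'|_H^2$, from which $|(I-P_C)k-(I-P_C)k'|_H\le|k-k'|_H$ follows by expanding the square. The remaining steps---applying the $H$-valued Gaussian Sobolev theory to the globally $H$-Lipschitz map $x\mapsto D_Hd_\Omega^2(x)$, and passing from $\gamma$ to $\nu=e^{-U}\gamma$ via the $L^q$-integrability of $e^{-U}$---are exactly as you outline.
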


We require some further regularity on $d_\Omega^2$.

\begin{hyp}\label{ipo_convex}
Let $\Om$ be an open convex subset of $X$ such that $\nu(\partial\Omega)=0$ and $D_H^2d_\Omega^2$ is $H$-continuous $\gamma$-a.e. in $X$, i.e., for $\gamma$-a.e. $x\in X$ we have 
\[
\lim_{\abs{h}_H\ra 0}D^2_H d^2_\Omega(x+h)=D^2_H d^2_\Omega(x).
\]
\end{hyp}

\begin{remark}{\rm 
As stated in \cite[Remark 1.7]{AFP18} there is a rather large class of subsets of $X$ satisfying Hypothesis \ref{ipo_convex}. For instance if $\partial \Omega$ is (locally) a $C^2$-embedding in $X$ of an open subset of a hyperplane in $X$ and $\nu(\partial\Omega)=0$, then Hypothesis \ref{ipo_convex} is satisfied. Easy examples are open balls and open ellipsoids of $X$, open hyperplanes of $X$ and every set of the form $\Omega=\set{x\in X\tc G(x)< 0}$, where $G:X\ra\R$ is a $C^2$-convex function such that $D_H G$ is non-zero at every point of $\partial \Omega$.
}\end{remark}

\noindent
We consider the semigroup $T_\Om(t)$ on $L^2(\Om,\nu)$ and its generator $L_\Omega$:
\begin{align}\label{defn_LOm}
D(L_{\Omega})=&\Bigl\{u\in D^{1,2}(\Omega,\nu)\,\Big|\, \exists v\in L^2(\Omega,\nu)\text{ such that }
\\
&\int_\Omega\langle D_Hu,D_H\varphi\rangle_Hd\nu
=-\int_\Omega v\varphi\, d\nu\ \forall\varphi\in\mathcal{F}C^\infty_b(\Om)\Bigr\}
\notag\end{align}
with $L_{\Omega}u:=v$ if $u\in D(L_{\Omega})$.
We recall (see \cite[Section 2]{AFP18}) an approximation procedure of $T_\Om(t)f$, when $f\in L^2(\Om, \nu)$,
through $\mathcal{F}C^3_b(X)$ functions that relies on reduction to a finite (say $n$-) dimensional space and on
a $\varepsilon$-penalisation argument. Accordingly, the approximation depends on two parameters $n$ and $\varepsilon$. More precisely, we consider the function $\Phi_\eps:X\to \R$ defined by
\[
\Phi_\eps(x):=U(x)+\frac{1}{2\eps}d^2_\Omega(x),\qquad\;\, x\in X,\, \eps>0,
\]
and the measure $\nu_\ve$ given by $e^{-\Phi_\eps}\gamma$. Next, we consider the operator $L_\ve$ on the whole $X$ defined as
\begin{align}\label{operator_veps}
D(L_{\eps})=&\Bigl\{u\in D^{1,2}(X,\nu_\eps)\,\Big|\, \exists\ v\in L^2(X,\nu_\eps)\text{ such that }
\\
&\int_X\gen{D_H u,D_H \varphi}_Hd\nu_\eps=-\int_X v\varphi\, d\nu_\eps
\text{ for every }\varphi\in\mathcal{F}C^\infty_b(X)\Bigr\},\notag
\end{align}
with $L_{\eps}u:=v$ if $u\in D(L_{\eps})$, and
the semigroup $T_\eps(t)$ generated by $L_\eps$ in $L^2(X, \nu_\eps)$. We point out that $L_\eps$ acts on smooth cylindrical functions $\varphi$ as follows
\begin{align*}
L_\eps\varphi&={\rm Tr}(D^2_H \varphi)-\sum_{i=1}^{\infty}\lambda_i^{-1}\langle x,e_i\rangle D_i\varphi-\langle D_H\Phi_\eps,D_H\varphi\rangle_H\\
&={\rm Tr}(D^2_H \varphi)-\sum_{i=1}^{\infty}\lambda_i^{-1}\langle x,e_i\rangle D_i\varphi
-\Big\langle D_HU+\frac{1}{2\eps}D_H d^2_\Om,D_H\varphi\Big\rangle_H.
\end{align*}
Now we recall a useful approximation result whose proof can be found in \cite[Theorem 2.8]{AFP18}.

\begin{theorem}\label{approx theorem}
Under Hypotheses $\ref{hyp_base}$, $\ref{ipotesi peso}$ and $\ref{ipo_convex}$ the following statements hold true.
\begin{enumerate}[\rm(i)]
\item For any $\eps>0$ and $f\in L^2(X,\nu_\eps)$, there exists a sequence $(f_n)_{n\in\N}\subseteq L^2(X,\nu_\eps)$ converging to $f$ in $L^2(X, \nu_\eps)$ such that $T_\eps(t)f_n$ is in $\mathcal{F}C^3_b(X)$ and
\begin{equation*}
\lim_{n\to\infty}\norm{T_{\eps}(t)f_n-T_\eps(t) f}_{D^{1,2}(X,\nu_\eps)}=0,\qquad\;\, t>0.
\end{equation*}
In addition, if $f\in D^{1,2}(X,\nu_\ve)$  then the sequence $(f_n)$ can be chosen in a way that $D_Hf_n$ converges to $D_Hf$ in $L^1(X, \nu_\ve; H)$, as $n\to\infty$.
\item For any $f\in L^2(\Omega,\nu)$ there exists an infinitesimal sequence $(\eps_n)_{n\in\N}$ such that
$T_{\eps_n}(t) \tilde{f}$ weakly converges to $T_\Omega(t) f$ in $D^{1,2}(\Omega,\nu)$, where $\tilde{f}$ is any $L^2$-extension of $f$ to $X$.
\end{enumerate}
\end{theorem}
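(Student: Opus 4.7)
The plan is to prove the two items separately, each by an approximation scheme tailored to the regularity one seeks.

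For item (i) I would run a Galerkin-type finite-dimensional reduction. Let $P_n$ denote the orthogonal projection onto $\mathrm{span}(v_1,\dots,v_n)$ and approximate $\Phi_\eps$ by cylindrical convex functions $\Phi_{\eps,n}$ (for instance $\Phi_\eps\circ P_n$ followed by a finite-dimensional Moreau--Yosida regularisation that smooths out the contribution of $d_\Om^2$), preserving the $H$-Lipschitz bound on the gradient. The resulting operators $L_{\eps,n}$ are smooth $n$-dimensional elliptic operators, so classical parabolic regularity in $\R^n$ yields $T_{\eps,n}(t)g\in\mathcal{F}C^3_b(X)$ for every cylindrical $g$. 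A Mosco / strong resolvent convergence argument, based on Hypothesis \ref{ipotesi peso} and Proposition \ref{prop_dist}, then gives $T_{\eps,n}(t)f\to T_\eps(t)f$ in $L^2(X,\nu_\eps)$. Choosing $f_n$ cylindrical of order $n$ converging to $f$ in $L^2$ and designed so that $T_\eps(t)f_n$ coincides with $T_{\eps,n}(t)f_n$ (legitimate whenever $\Phi_{\eps,n}=\Phi_\eps$ on $\supp f_n$, or more generally up to a controlled error absorbed into the $L^2$-error) produces the required sequence. The $D^{1,2}(X,\nu_\eps)$-convergence is a consequence of the uniform gradient estimates for the analytic semigroup $T_\eps(t)$, and the improvement to $L^1$-convergence of $D_Hf_n$ when $f\in D^{1,2}$ is a direct Cauchy--Schwarz application combined with the energy identity.

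For item (ii), start from any $L^2(X,\nu)$-extension $\tilde f$ of $f$ and set $u_\eps(t):=T_\eps(t)\tilde f$. The key observations are $\nu_\eps|_\Om=\nu|_\Om$ (since $d_\Om\equiv 0$ on $\Om$) and the concentration $\nu_\eps(X\setminus\Om)\to 0$ as $\eps\to 0^+$. The dissipative energy identity yields
\[
\int_0^T\!\!\int_X |D_Hu_\eps(t)|_H^2\,d\nu_\eps\,dt\le C\|\tilde f\|_{L^2(X,\nu_\eps)}^2,
\]
so $(u_\eps)|_\Om$ is bounded in $D^{1,2}(\Om,\nu)$ uniformly in $\eps$. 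Extract a subsequence $\eps_n\to 0^+$ such that $u_{\eps_n}|_\Om$ converges weakly in $D^{1,2}(\Om,\nu)$ to some $u$. To identify $u=T_\Om(t)f$ one passes to the limit in the weak formulation defining $L_\eps$, tested against $\varphi\in\mathcal{F}C^\infty_b(\Om)$ (harmlessly extended to $X$): the penalisation contribution $(2\eps)^{-1}\int_X\langle D_Hd_\Om^2,D_H\varphi\rangle_H u_\eps\,d\nu_\eps$ vanishes because $D_Hd_\Om^2=0$ on $\Om$ while $\nu_\eps$ is exponentially small off $\Om$. Uniqueness of the variational solution attached to $L_\Om$ closes the argument.

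The main obstacle, typical of Moreau--Yosida penalisations, lies in item (ii): verifying that the weak $D^{1,2}(\Om,\nu)$-limit $u$ solves the correct Neumann-like variational identity rather than picking up a spurious boundary contribution from the behaviour of $u_\eps$ across $\partial\Om$. The $H$-regularity of $d_\Om^2$ encoded in Proposition \ref{prop_dist} and Hypothesis \ref{ipo_convex} is precisely what makes the penalising vector field $\eps^{-1}D_Hd_\Om^2$ monotone enough to control the error. Technically, the delicate points will be a careful accounting of test functions on $\Om$ versus on $X$, and upgrading weak $L^2$-convergence to weak $D^{1,2}(\Om,\nu)$-convergence via lower semicontinuity of the Dirichlet form.
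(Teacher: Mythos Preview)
The paper does not prove this result; it simply invokes \cite[Theorem~2.8]{AFP18}. Your two-step outline (finite-dimensional Galerkin reduction for (i), Moreau--Yosida penalisation and weak compactness for (ii)) is exactly the architecture sketched in the paragraph preceding the statement, so at the level of strategy you are aligned with the source.

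There is, however, a genuine gap in your mechanism for (i). You claim one can take cylindrical $f_n$ so that $T_\eps(t)f_n$ coincides with $T_{\eps,n}(t)f_n$ ``whenever $\Phi_{\eps,n}=\Phi_\eps$ on $\supp f_n$''. This is false: the semigroup is nonlocal and instantly spreads the datum over all of $X$, so $T_\eps(t)f_n$ depends on $\Phi_\eps$ everywhere and has no reason to be cylindrical even if $f_n$ is. The way the argument in \cite{AFP18} actually runs is that the cylindrical regularity comes from the \emph{approximating} semigroup $T_{\eps,n}(t)$ (genuinely finite-dimensional, hence mapping cylindrical data into $\mathcal{F}C^3_b$); one then proves $T_{\eps,n}(t)\to T_\eps(t)$ via Mosco convergence of the forms (this is where Hypothesis~\ref{ipo_convex} on the $H$-continuity of $D_H^2d_\Omega^2$ is used), and the sequence $(f_n)$ in the statement arises from a diagonal extraction, not from a support trick.

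For (ii) your sketch is closer, but two points need repair. First, the energy identity you quote only bounds $u_\eps$ in $L^2\bigl(0,T;D^{1,2}(X,\nu_\eps)\bigr)$, which does not give weak compactness in $D^{1,2}(\Omega,\nu)$ at a \emph{fixed} $t>0$; you need the pointwise gradient bound \eqref{gra-est1} with an $\eps$-independent constant. Second, test functions $\varphi\in\mathcal{F}C^\infty_b(\Omega)$ are restrictions of global cylindrical functions and are \emph{not} supported in $\Omega$, so the penalisation term does not vanish just because $D_Hd_\Omega^2=0$ on $\Omega$; one must control $(2\eps)^{-1}\int_{X\setminus\Omega}\langle D_Hd_\Omega^2,D_H\varphi\rangle_H\,u_\eps\,d\nu_\eps$ using $|D_Hd_\Omega^2|_H\le 2d_\Omega$ (from Proposition~\ref{prop_dist}) together with the decay of $(2\eps)^{-1}e^{-(2\eps)^{-1}d_\Omega^2}$, and this is precisely the delicate estimate.
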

We collect some properties of $T_\Om(t)$, see \cite[Proposition 1.10, Theorems 3.1 \& 3.3]{AFP18}.

\begin{proposition}\label{main_properties}
If Hypotheses $\ref{hyp_base}$, $\ref{ipotesi peso}$ and $\ref{ipo_convex}$ hold true, then
\begin{enumerate}[\rm(i)]
\item the semigroup $T_\Omega(t)$ generated in $L^2(\Om,\nu)$ can be extended to a positivity preserving contraction
semigroup in $L^p(\Omega,\nu)$ for every $1\leq p\leq \infty$ and $t\geq 0$, still denoted by $T_\Omega(t)$. It
is strongly continuous in $L^p(\Om, \nu)$ for any $p \in [1,\infty)$ and consistent;
\item for any $p\in[1,\infty)$, $f\in L^p(\Omega,\nu)$ and $g\in L^\infty(\Omega,\nu)$ it holds
\begin{equation}\label{invariance}
\int_\Omega f T_\Omega(t)gd\nu=\int_\Omega gT_\Omega(t)fd\nu,\qquad t>0;
\end{equation}
\item for any $p\in(1,\infty)$, $f\in L^p(\Om,\nu)$ and $t>0$ there is $K_p>0$ such that
\begin{gather}\label{gra-est1}
|D_HT_\Om(t)f|_H^p\leq K_pt^{-p/2}T_\Om(t)|f|^p\qquad \nu\text{-a.e. in }\Om;
\end{gather}
\item if $f\in D^{1,p}(\Om,\nu)$, $t>0$ and $p\in[1,\infty)$ it holds
\begin{equation}\label{gra-est}
|D_H T_\Om(t) f|^p\leq e^{-p\lambda_1^{-1}t}T_\Om(t)|D_H f|^p_H\qquad \nu\text{-a.e. in }\Om.
\end{equation}
\end{enumerate}
\end{proposition}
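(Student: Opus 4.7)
The plan is to prove each assertion first on the whole space for the approximating semigroups $T_\eps(t)$ with invariant measure $\nu_\eps=e^{-\Phi_\eps}\gamma$, exploiting the convexity of $\Phi_\eps=U+\tfrac{1}{2\eps}d_\Om^2$, and then transfer to $T_\Om(t)$ via Theorem \ref{approx theorem}.

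For (i), the bilinear form $(u,v)\mapsto\int_X\langle D_H u,D_H v\rangle_H d\nu_\eps$ is a symmetric Markovian Dirichlet form, so by Beurling--Deny theory $T_\eps(t)$ extends to a positivity-preserving contraction semigroup on each $L^p(X,\nu_\eps)$, strongly continuous for $p<\infty$. These properties are inherited in the limit $\eps_n\to 0$ by $T_\Om(t)$: weak convergence in $D^{1,2}(\Om,\nu)$ and weak lower semicontinuity of $L^p$-norms give the $L^p$-contraction and positivity on $\Om$, while consistency in $p$ follows from agreement with the $L^2$-semigroup on $L^2\cap L^p$. Strong continuity in $L^p(\Om,\nu)$ then follows from contraction together with density of $\mathcal{F}C_b^\infty(\Om)$. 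For (ii), the bilinear form defining $L_\Om$ in \eqref{defn_LOm} is manifestly symmetric, so $L_\Om$ is self-adjoint in $L^2(\Om,\nu)$; hence $T_\Om(t)$ is symmetric in $L^2$, and \eqref{invariance} extends to $f\in L^p(\Om,\nu)$ and $g\in L^\infty(\Om,\nu)$ by truncating $g$ in $L^2\cap L^\infty$, approximating $f$ in $L^2\cap L^p$ and invoking the $L^p$-contraction from (i).

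The gradient estimates (iii) and (iv) are the technical heart. Since $U$ is convex and $d_\Om^2$ is convex on $X$ (by convexity of $\Om$), $\Phi_\eps$ is convex with $D_H^2 \Phi_\eps\ge 0$ for $\nu$-a.e.\ $x$. Combined with $Q_\infty^{-1}\ge\lambda_1^{-1}I$ on $H$, this yields a Bakry--\'Emery curvature lower bound $\rho=\lambda_1^{-1}$ for $L_\eps$, uniformly in $\eps$. Working on smooth cylindrical functions and finite-dimensional Galerkin projections (in which $T_\eps(t)f$ is smooth by Theorem \ref{approx theorem}(i)), the standard $\Gamma_2$-calculus yields the bound
\[
|D_H T_\eps(t)f|_H^p\le e^{-p\lambda_1^{-1}t}\,T_\eps(t)|D_H f|_H^p,\qquad f\in D^{1,p}(X,\nu_\eps),
\]
while a Bismut--Elworthy--Li / Wang--Harnack-type argument provides the regularising estimate
\[
|D_H T_\eps(t)f|_H^p\le K_p\,t^{-p/2}\,T_\eps(t)|f|^p,
\]
with $K_p$ depending only on $p$ and $\lambda_1$ (hence $\eps$-independent). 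Testing each inequality against an arbitrary non-negative $\varphi\in\mathcal{F}C_b^\infty(\Om)$, integrating against $d\nu$ on $\Om$ (where $\nu_{\eps_n}\to\nu$), and exploiting the weak $D^{1,2}$-convergence provided by Theorem \ref{approx theorem}(ii), one passes to the limit $\eps_n\to 0$; the arbitrariness of $\varphi\ge 0$ then upgrades the resulting integral inequality to the pointwise $\nu$-a.e.\ form on $\Om$ asserted in \eqref{gra-est1}--\eqref{gra-est}.

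The main obstacle is precisely this last limit passage: pointwise inequalities are not preserved under weak $D^{1,2}$-convergence, so the argument has to be routed through integration against non-negative test functions followed by localisation. Two ingredients make this work: first, the convexity of $\Phi_\eps$ guarantees that the Bakry--\'Emery constants $K_p$ and $\lambda_1^{-1}$ do not depend on $\eps$; second, the strong penalisation $\tfrac{1}{2\eps}d_\Om^2$ forces the mass of $\nu_\eps$ to concentrate on $\Om$ as $\eps\to 0^+$, so that estimates with respect to $\nu_\eps$ pass cleanly to estimates with respect to $\nu$ on $\Om$ without spurious boundary contributions (here Hypothesis \ref{ipo_convex}, in particular $\nu(\partial\Om)=0$, is used).
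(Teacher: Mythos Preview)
The paper does not give a proof of this proposition: it is stated as a collection of known facts with an explicit citation to \cite[Proposition 1.10, Theorems 3.1 \& 3.3]{AFP18}, the authors' earlier paper where these results are established in detail. Your sketch is a faithful outline of precisely the strategy carried out there---Markovian Dirichlet form theory for (i)--(ii), Bakry--\'Emery/$\Gamma_2$-calculus on the penalised problems $(L_\eps,\nu_\eps)$ for the pointwise gradient bounds (iii)--(iv), and passage to the limit on $\Om$ via the approximation of Theorem~\ref{approx theorem}---so there is no discrepancy to discuss.
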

We point out that the results in Proposition \ref{main_properties} continue to hold if we replace $\Om$, $\nu$ and
$T_\Om(t)$ by $X$, $\nu_\eps$ and $T_\eps(t)$, respectively.

\subsection{BV functions in Hilbert spaces: definitions and some known facts}

We introduce $BV$ functions in the Wiener space setting. Let $Y$ be a separable Hilbert space with norm 
$\abs{\cdot}_Y$. We recall that in separable
spaces the $\sigma$-algebra $\mathcal{B}(X)$ is generated by the family of the cylindrical sets (see e.g. \cite{VTC}).
Denote by $\mathcal{M}(\Om;Y)$ the set of Borel
$Y$-valued measures on $\Om$. If $Y=\R$ then we write $\mathcal{M}(\Om)$. The total variation of
$\mu\in\mathcal{M}(\Om;Y)$ is the positive Borel measure
\begin{align*}
|\mu|(B):=\sup\set{\sum_{n=1}^{\infty}|\mu(B_n)|_Y\tc
\begin{array}{c}
 B=\bigcup_{n=1}^{\infty}B_n,\ B_n \in \mathcal{B}(\Om)\\
 B_n\cap B_m=\emptyset, \text{ if } n\neq m,
 \end{array}},\ \ B\in \mathcal{B}(\Omega).
\end{align*}
Let ${\rm Lip}_c(\Omega;Y)$ be the set of bounded Lipschitz continuous $Y$-valued functions $g:\Omega\ra Y$ such that
${\rm dist}({\rm supp}\, g,X\smallsetminus \Omega)>0$ and define the space $BV(\Om,\nu)$ as follows.

\begin{definition}
Let $\Omega$ be an open subset of $X$. We say that a function $f\in L^2(\Omega,\nu)$ is of bounded variation in $\Omega$, and we write $f\in BV(\Omega,\nu)$, if there exists a measure $\mu\in\mathcal{M}(\Omega;H)$ such that
\begin{gather*}
\int_\Omega f\partial_h^* gd\nu=-\int_\Omega gd\gen{\mu,h}_H,
\end{gather*}
for every $g\in {\rm Lip}_c(\Omega)$ and $h\in H$, where $\partial_h^*$ denotes, up to the sign,
the adjoint in $L^2(\Om,\nu)$ of the partial derivative along $h\in H$. In this case we set $D_\nu f:=\mu$.
\end{definition}
As in the finite dimensional case, one can characterise functions of bounded variation by
their total variation.

\begin{definition}
Let $\Omega$ be an open subset of $X$ and $u\in L^2(\Omega,\nu)$. We define the variation of $u$ in $\Omega$ by
\begin{gather*}
V_\nu(u,\Omega):=\sup\set{\int_\Omega u\,{\rm div}_\nu\, gd\nu\tc \begin{array}{c}
F\subseteq H\text{ \rm  finite dimensional},\\
g\in{\rm Lip}_c(\Omega;F),\ \norm{g}_\infty\leq 1.
\end{array}}.
\end{gather*}
Here ${\rm div}_\nu\, g= \sum_{i=1}^N \partial_{k_i}^*g_i(x)$ if $g(x)=\sum_{i=1}^N g_i(x)k_i$ and 
$F={\rm span}\{k_1,\ldots, k_N\}$ for some $N\in\N$.
\end{definition}
\noindent When $\Om=X$, in the two definitions above we can consider ${\rm Lip}_b(X)$ and ${\rm Lip}_b(X;F)$ respectively, as test functions spaces.

As announced, in \cite[Theorem 5.7]{AM} it has been proved that
$u\in BV(\Omega,\nu)$ if and only if $V_\nu(u,\Omega)$ is finite. Moreover, in this case
\begin{gather}\label{mis=var}
|D_\nu u|(\Omega)=V_\nu(u,\Omega).
\end{gather}
Finally we say that a Borel subset $E$ of $X$ is of finite perimeter in $\Omega$ with respect to $\nu$, whenever the function $\chi_E$ belongs to $BV(\Om,\nu)$. In this case we denote by $P_\nu(E,\Omega)$ the total variation of $\chi_E$ in $\Om$.

\section{A De Giorgi type characterisation}

The main result of this section is the De Giorgi type characterisation of $BV(\Om, \nu)$ functions in
Theorem \ref{bv_dg}, which relies on a ``quasi-commutative'' formula between the semigroup $T_\Omega(t)$
and the $H$-gradient operator $D_H$; here estimate \eqref{gra-est} plays a crucial role.
This formula is inspired by an analogous formula proved in \cite{DPL18}.
We first define the Sobolev spaces $D^{1,2}(X,\nu_\eps;H)$.

\begin{definition}
We denote by $D^{1,2}(X,\nu_\eps;H)$ the domain of the closure of the operator $D_H:\mathcal{F}C_b^1(X,H)\ra L^2(X,\nu_\eps;\mathcal{H}_2)$ in the $L^2(X,\nu_\eps;H)$ norm $($see \cite[Section 8.1]{Bog2}$)$. $D_H$ is defined as
\[
D_H\Phi(x)=\sum_{i=1}^n\sum_{j=1}^{k(i)}\frac{\partial \varphi_i}{\partial \xi_j}(\langle x,x_1\rangle,\ldots,\langle x,x_{k(i)}\rangle)((Q^{1/2}_\infty x_j)\otimes e_i),
\]
where $\{e_i\,|\,i\in\N\}$ is an orthonormal basis of $H$ and
\[
\Phi(x)=\sum_{i=1}^n\varphi_i(\langle x,x_1\rangle,\ldots,\langle x,x_{k(i)}\rangle) e_i
\]
for some $n\in\N$, $k(i)\in\N$, $x_1,\ldots, x_{k(i)}\in X$ and
$\varphi_i\in C_b^1(\R^{k(i)})$ for every $i=1,\ldots,n$. In an analogous way we define the space $D^{1,2}(\Omega,\nu;H)$.
\end{definition}

We first show a vector-valued version of Theorem \ref{approx theorem}. Let ${\bf L}_{\eps}$ in $L^2(X,\nu_\eps;H)$
be the operator defined via the quadratic form by
\[
(F,G)\mapsto\int_X\gen{D_H F,D_H G}_{\mathcal{H}_{\,2}}d\nu_\eps\qquad F,G\in D^{1,2}(X,\nu_\eps;H).
\]
In the same way we define the operator ${\bf L}_\Omega$ in in $L^2(\Om,\nu;H)$.
We recall that by \cite[p. 84]{EN1} (\cite[Corollary 3.17 and Proposition 3.23]{EN1} and \cite[Corollary 4.8]{EN1}, respectively), the operators ${\bf L}_{\eps}$ and ${\bf L}_{\Omega}$ generate strongly continuous semigoups
${\bf T}_\eps(t)$ and ${\bf T}_\Omega(t)$ (contractive and analytic, respectively).

\begin{proposition}\label{component}
The operators ${\bf L}_\eps$, ${\bf L}_\Omega$ and the semigroups ${\bf T}_\eps(t)$ and
${\bf T}_\Omega(t)$ act component by component, i.e., if $F\in D({\bf L}_\eps)$ $(D({\bf L}_\Omega)$, respectively$)$, and 
it is such that $F=\sum_{i=1}^{\infty}f_ie_i$ for some basis $\{e_n\,|\,n\in\N\}$ of $H$, then 
$f_i\in D(L_\eps)$ $(D(L_\Omega)$, respectively$)$ and
\[
{\bf L}_\eps F=\sum_{i=1}^{\infty}(L_\eps f_i)e_i,\qquad {\bf L}_\Omega F=
\sum_{i=1}^{\infty}(L_\Omega f_i)e_i .
\]
Moreover for every $t>0$, if $F\in L^2(X,\nu_\eps;H)$ $(L^2(\Omega,\nu;H)$, respectively$)$, and it is such that
$F=\sum_{i=1}^{\infty}f_ie_i$ for some basis $\{e_n\,|\,n\in\N\}$ of $H$ and $f_i\in L^2(X,\nu_\eps)$
($L^2(\Omega,\nu)$, respectively) then
\[
{\bf T}_\eps(t)F=\sum_{i=1}^{\infty}(T_\eps(t)f_i)e_i,\qquad \pa{{\bf T}_\Omega(t)F=
\sum_{i=1}^{\infty}(T_\Omega(t)f_i)e_i\text{, respectively}}.
\]
The above identities hold $\nu_\eps$-a.e. in $X$ ($\nu_\Om$-a.e. in $\Om$, respectively).
\end{proposition}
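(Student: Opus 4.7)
The plan is to test the quadratic forms defining $\mathbf{L}_\eps$ and $\mathbf{L}_\Omega$ against the rank-one $H$-valued cylindrical fields $\varphi e_j$ and exploit the orthogonality of the basis $\{e_n\}_{n\in\N}$ to reduce every assertion to the scalar defining relations \eqref{operator_veps} and \eqref{defn_LOm}; the componentwise formula for the semigroups will then follow from the Hille--Yosida uniqueness of $C_0$-semigroups.

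First I would establish the following compatibility lemma: for every $F\in D^{1,2}(X,\nu_\eps;H)$ and every $j\in\N$, the component $f_j:=\gen{F,e_j}_H$ belongs to $D^{1,2}(X,\nu_\eps)$ and $D_Hf_j=(D_HF)^*e_j$ $\nu_\eps$-a.e.\ (and analogously on $\Omega$). This is obtained by approximating $F$ by finite sums $\Phi^{(k)}=\sum_i\varphi_i^{(k)}e_i\in\mathcal{F}C_b^1(X,H)$ converging to $F$ in $D^{1,2}(X,\nu_\eps;H)$, noting that for such finite sums the definition of $D_H$ given in the statement yields $(D_H\Phi^{(k)})^*e_j=D_H\varphi_j^{(k)}$, and then invoking the boundedness of the map $A\mapsto A^*e_j$ from $\mathcal{H}_2$ to $H$ together with the closability of the scalar $D_H$. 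Now fix $F\in D(\mathbf{L}_\eps)$ and take $\Phi=\varphi e_j$ with $\varphi\in\mathcal{F}C_b^\infty(X)$. Since $D_H(\varphi e_j)=D_H\varphi\otimes e_j$ and $\gen{A,u\otimes e_j}_{\mathcal{H}_2}=\gen{A^*e_j,u}_H$, the defining relation of $\mathbf{L}_\eps$ becomes
\[
\int_X\gen{D_Hf_j,D_H\varphi}_H d\nu_\eps=-\int_X\gen{\mathbf{L}_\eps F,e_j}_H\,\varphi\,d\nu_\eps,
\]
so by \eqref{operator_veps} one has $f_j\in D(L_\eps)$ with $L_\eps f_j=\gen{\mathbf{L}_\eps F,e_j}_H$; Parseval then gives the componentwise formula $\mathbf{L}_\eps F=\sum_i(L_\eps f_i)e_i$. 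The same argument with \eqref{defn_LOm} and $\varphi\in\mathcal{F}C_b^\infty(\Omega)$ handles $\mathbf{L}_\Omega$.

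For the semigroup part, define $\tilde{\mathbf{T}}_\eps(t)F:=\sum_i(T_\eps(t)f_i)e_i$ for $F=\sum_i f_ie_i\in L^2(X,\nu_\eps;H)$. The $L^2$-contractivity of $T_\eps(t)$ (Proposition \ref{main_properties}(i), transferred to the penalised setting) guarantees convergence of the series in $L^2(X,\nu_\eps;H)$ with $\norm{\tilde{\mathbf{T}}_\eps(t)F}\leq\norm{F}$; the uniform bound $\norm{T_\eps(t)f_i-f_i}_{L^2}^2\leq 4\norm{f_i}_{L^2}^2$ and dominated convergence in $\ell^2$ yield strong continuity at $t=0^+$; the semigroup law is inherited componentwise. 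The infinitesimal generator of $\tilde{\mathbf{T}}_\eps(t)$ consists exactly of the $F=\sum f_ie_i$ with $f_i\in D(L_\eps)$ and $\sum_i\norm{L_\eps f_i}_{L^2}^2<\infty$, and acts componentwise; by the formula for $\mathbf{L}_\eps$ just proved this is precisely $D(\mathbf{L}_\eps)$ with action $\mathbf{L}_\eps$. Hille--Yosida uniqueness then yields $\tilde{\mathbf{T}}_\eps(t)=\mathbf{T}_\eps(t)$, and the argument for $\mathbf{T}_\Omega(t)$ is identical.

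The main obstacle is the compatibility lemma of the second paragraph: one must show that the closure of $D_H:\mathcal{F}C_b^1(X,H)\to L^2(X,\nu_\eps;\mathcal{H}_2)$ commutes with projection onto each scalar component through the map $A\mapsto A^*e_j$, i.e.\ that the closure respects the orthogonal decomposition of $H$-valued fields. Once this is in place, everything else reduces to orthogonality bookkeeping and the contractivity of the scalar semigroups.
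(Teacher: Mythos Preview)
Your argument for the componentwise action of $\mathbf{L}_\eps$ is essentially the paper's: both test the defining form against rank-one fields $g\,e_j$ to extract the scalar relation for $f_j$. You are in fact more careful than the paper in isolating the compatibility lemma ($f_j\in D^{1,2}$ with $D_Hf_j=(D_HF)^*e_j$), which the paper uses implicitly when it writes $\int_X\gen{D_HG,D_HF}_{\mathcal{H}_2}d\nu_\eps=\int_X\gen{D_Hg,D_Hf_j}_Hd\nu_\eps$ without comment.

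For the semigroup you take a genuinely different route. The paper projects $\mathbf{T}_\eps(t)F$ onto the $j$-th component, observes that $t\mapsto(\mathbf{T}_\eps(t)F)_j$ solves the scalar Cauchy problem $u'=L_\eps u$, $u(0)=f_j$ in $L^2(X,\nu_\eps)$ (using the generator identity just proved together with $\mathbf{T}_\eps(t)F\in D(\mathbf{L}_\eps)$ for $t>0$), and invokes uniqueness for that problem to get $(\mathbf{T}_\eps(t)F)_j=T_\eps(t)f_j$ directly. You instead assemble the candidate $\tilde{\mathbf{T}}_\eps(t)F=\sum_i(T_\eps(t)f_i)e_i$, identify its generator, and appeal to Hille--Yosida uniqueness. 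This works, but there is a small gap: from the first part you have only established the inclusion $D(\mathbf{L}_\eps)\subseteq\{F:\ f_i\in D(L_\eps),\ \sum_i\norm{L_\eps f_i}_{L^2}^2<\infty\}$, not the equality you assert. The gap closes once you observe that a generator of a $C_0$-contraction semigroup admits no proper extension that is itself such a generator (for $\lambda>0$ the map $\lambda-\mathbf{L}_\eps$ is already a bijection onto $L^2(X,\nu_\eps;H)$, so any strict extension would fail to be injective); hence $\mathbf{L}_\eps\subseteq\mathrm{gen}(\tilde{\mathbf{T}}_\eps)$ forces equality. The paper's Cauchy-problem argument is shorter precisely because it avoids having to match the two domains.
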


\begin{proof}
We only show the results for ${\bf L}_\eps$ and ${\bf T}_\eps(t)$.
Let $F=\sum_{i=1}^{\infty}f_ie_i \in D({\bf L}_\eps)$ and let $G=ge_j$ for some $j\in\N$ and $g\in D^{1,2}(X,\nu_\eps)$; then
\begin{align*}
\int_X\gen{D_H g,D_H f_j}_Hd\nu_\eps &=\int_X\gen{D_H G,D_H F}_{\mathcal{H}_2}d\nu_\eps
\\
&=-\int_X\gen{G,{\bf L}_\eps F}_Hd\nu_\eps=-\int_X g({\bf L}_\eps F)_jd\nu_\eps.
\end{align*}
This shows that $f_j\in D(L_\eps)$ (see \eqref{operator_veps}) and $L_\eps f_j=({\bf L}_\eps F)_j$.
Now observe that
\[
D_t({\bf T}_\eps(t)F)_j=({\bf L}_\eps {\bf T}_\eps(t)F)_j=L_\eps({\bf T}_\eps(t)F)_j,\qquad ({\bf T}_\eps(0)F)_j=f_j.
\]
Thus, by the uniqueness of the solution of the Cauchy problem associated with $D_t-L_\eps$ in $L^2(X, \nu_\eps)$,
it follows that $({\bf T}_\eps(t)F)_j=T_\eps(t)f_j$ for any $t>0$. The arbitrariness of $j\in\N$ concludes the proof.
\end{proof}

\begin{remark}
According to the definition of ${\bf T}_\eps(t)$ and ${\bf T}_\Omega(t)$ it is immediately seen that for every
$F\in L^2(X, \nu_\eps;H)$ and $G\in L^2(\Om,\nu;H)$
\begin{equation}\label{comparison}
|{\bf T}_\eps(t)F|^2\le T_\eps(t)|F|^2, \qquad t\ge 0,\  \nu_\eps\text{\rm -a.e. in } X
\end{equation}
and
\begin{equation}\label{comparison1}
|{\bf T}_\Om(t)G|^2\le T_\Om(t)|G|^2, \qquad t\ge 0,\  \nu\text{\rm -a.e. in } \Om.
\end{equation}
\end{remark}
Moreover, taking into account that the semigroups ${\bf T}_\Omega(t)$ and ${\bf T}_\eps(t)$ act
component by component, we can obtain a vector-valued version of Theorem \ref{approx theorem}.

\begin{theorem}\label{Appr-vect}
Under Hypotheses $\ref{hyp_base}$, $\ref{ipotesi peso}$ and $\ref{ipo_convex}$, the following statements hold true.
\begin{enumerate}[\rm(i)]
\item For any $\eps>0$ and $F\in L^2(X,\nu_\eps;H)$, there exists a sequence $(F_n)_{n}\subseteq L^2(X,\nu_\eps;H)$ such that ${\bf T}_\eps(t)F_n$ belongs to $\mathcal{F}C^3_b(X;H)$ and
\begin{align}
&\lim_{n\ra\infty}\norm{F_n-F}_{L^{2}(X,\nu_\eps;H)}=0,\notag \\
&\lim_{n\ra \infty}\norm{{\bf T}_{\eps}(t)F_n-{\bf T}_\eps(t) F}_{D^{1,2}(X,\nu_\eps;H)}=0,\qquad\;\, t>0.\label{radiosveglia1}
\end{align}
If, in addition, $F\in D^{1,2}(X,\nu_\ve;H)$  then $D_H F_n$ converges to $D_H F$ in $L^1(X, \nu_\ve; \mathcal{H}_2)$, as $n\to \infty$.
\item For any $F\in L^2(\Omega,\nu;H)$ there exists an infinitesimal sequence $(\eps_n)_{n\in\N}$ such that ${\bf T}_{\eps_n}(t) \tilde{F}$ weakly converges to ${\bf T}_\Omega(t) F$ in $D^{1,2}(\Omega,\nu;H)$, where $\tilde{F}$ is any $L^2$-extension of $F$ to $X$.
\end{enumerate}
\end{theorem}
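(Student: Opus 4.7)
My plan is to deduce both statements from the scalar Theorem \ref{approx theorem} by exploiting the component-by-component action of ${\bf T}_\eps(t)$ and ${\bf T}_\Omega(t)$ given by Proposition \ref{component}. The crucial smoothing input is the vector-valued version of \eqref{gra-est1}, namely $\|D_H{\bf T}_\eps(t)G\|_{L^2(X,\nu_\eps;\mathcal{H}_2)}^2\le K_2 t^{-1}\|G\|_{L^2(X,\nu_\eps;H)}^2$ and its analogue on $\Om$, which follow by squaring \eqref{gra-est1}, integrating coordinate-wise and using Proposition \ref{component}.

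For (i), expand $F=\sum_{i\in\N}f_ie_i$ with $f_i:=\gen{F,e_i}_H$ and set $F^{(N)}:=\sum_{i=1}^N f_ie_i$, so that $F^{(N)}\to F$ in $L^2(X,\nu_\eps;H)$. For each $i$, Theorem \ref{approx theorem}(i) provides a sequence $(f_i^k)_k\subseteq L^2(X,\nu_\eps)$ with $T_\eps(t)f_i^k\in\mathcal{F}C^3_b(X)$ and $\|T_\eps(t)f_i^k-T_\eps(t)f_i\|_{D^{1,2}(X,\nu_\eps)}\to 0$ as $k\to\infty$. A diagonal choice $F_n:=\sum_{i=1}^{N_n}f_i^{k(n,i)}e_i$ yields, by Proposition \ref{component}, ${\bf T}_\eps(t)F_n=\sum_{i=1}^{N_n}T_\eps(t)f_i^{k(n,i)}e_i\in\mathcal{F}C^3_b(X;H)$. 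Splitting the total error as $\|{\bf T}_\eps(t)F_n-{\bf T}_\eps(t)F^{(N_n)}\|_{D^{1,2}}+\|{\bf T}_\eps(t)(F^{(N_n)}-F)\|_{D^{1,2}}$, the first summand is a finite sum of scalar $D^{1,2}$-errors driven to zero by the diagonal choice, and the second is bounded by the smoothing inequality above times $\|F^{(N_n)}-F\|_{L^2}\to 0$. When $F\in D^{1,2}(X,\nu_\eps;H)$, each $f_i\in D^{1,2}(X,\nu_\eps)$, the addendum of the scalar theorem gives $D_Hf_i^k\to D_Hf_i$ in $L^1$, and the tail $\sum_{i>N}\|D_Hf_i\|_{L^2(X,\nu_\eps;H)}^2\to 0$ is converted from $L^2$ to $L^1$ via Cauchy--Schwarz and the finiteness of $\nu_\eps$, enabling a further diagonal step to conclude.

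For (ii), fix any $L^2$-extension $\tilde F$ of $F$ and an arbitrary infinitesimal sequence $(\eps_n)$. The vector gradient bound and the contractivity of ${\bf T}_{\eps_n}(t)$ give uniform boundedness of ${\bf T}_{\eps_n}(t)\tilde F$ in $D^{1,2}(X,\nu_{\eps_n};H)$; since $\nu_{\eps_n}$ equals $\nu$ on $\Om$, this passes to uniform boundedness in the Hilbert space $D^{1,2}(\Om,\nu;H)$. By weak compactness, along a subsequence, still written $(\eps_n)$, one has ${\bf T}_{\eps_n}(t)\tilde F\rightharpoonup G$ in $D^{1,2}(\Om,\nu;H)$. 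The map $V\mapsto\gen{V,e_i}_H$ being weakly continuous from $D^{1,2}(\Om,\nu;H)$ to $D^{1,2}(\Om,\nu)$, Proposition \ref{component} gives $T_{\eps_n}(t)\tilde f_i\rightharpoonup\gen{G,e_i}_H$ in $D^{1,2}(\Om,\nu)$. Applying Theorem \ref{approx theorem}(ii) to each $f_i$ along further subsequences and invoking uniqueness of weak limits, we identify $\gen{G,e_i}_H=T_\Om(t)f_i$ for all $i$, so Proposition \ref{component} gives $G={\bf T}_\Om(t)F$.

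The main obstacle lies in part (i): coordinating the truncation index $N$ and the scalar approximation index $k$ so that ${\bf T}_\eps(t)F_n$ remains in $\mathcal{F}C^3_b(X;H)$ while converging in the full $D^{1,2}(X,\nu_\eps;H)$ norm. This is exactly where the smoothing estimate inherited from \eqref{gra-est1} is essential, since it trades the easily controlled $L^2$ tail of $F$ for the needed $D^{1,2}$ control of its image under the semigroup.
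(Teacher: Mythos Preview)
Your proposal is correct and follows the same componentwise-plus-diagonal strategy as the paper's (much terser) proof. Your explicit use of the smoothing bound inherited from \eqref{gra-est1} to control the tail $\|{\bf T}_\eps(t)(F^{(N_n)}-F)\|_{D^{1,2}}$ in part (i) fills in a detail the paper leaves to the reader, and your weak-compactness identification in (ii) is a legitimate way to unpack the paper's one-line appeal to Proposition \ref{component} and Theorem \ref{approx theorem}(ii).
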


\begin{proof}
(i) Let $F=\sum_{i=1}^{\infty}f^{(i)}e_i$ where $f^{(i)}\in L^2(X,\nu_\eps)$, $i\in\N$. For every $i\in\N$,
by Theorem \ref{approx theorem}(i), there exists $(f_k^{(i)})_{k\in\N}\subseteq L^2(X,\nu_\eps)$ converging to $f^{(i)}$
in $L^2(X,\nu_\eps)$ such that $T_\eps(t)f_k^{(i)}$ belongs to $\mathcal{F}C^3_b(X)$ and
\begin{align}\label{radiosveglia2}
\lim_{k\ra \infty}\|T_\eps(t)f_k^{(i)}-T_\eps(t)f^{(i)}\|_{D^{1,2}(X,\nu_\eps)}=0,\qquad t, \eps>0.
\end{align}
Observe that \eqref{radiosveglia1} follows immediately from \eqref{radiosveglia2}. Now fix $i,n\in\N$ and consider $k_i\in\N$ such that for every $k\geq k_i$ it holds
\[
\int_X|f_k^{(i)}-f^{(i)}|^2d\nu_\eps<\frac{1}{n2^i}.
\]
Consider the vector field $F_n:=\sum_{i=1}^n f_{k_i}^{(i)}e_i$. We claim that $(F_n)$ is the sequence we are looking for. Indeed $F_n$ belongs to $L^2(X,\nu_\eps;H)$ for any $n\in\N$. Let $n_0\in\N$ be such that $\sum_{i=n_0+1}^{\infty}\|f^{(i)}\|_{L^2(X,\nu_\eps)}^2\leq \eta/2$ and let $n\geq n_0$ such that $1/n<\eta/2$. We have
\[
\norm{F_n-F}_{L^2(X,\nu_\eps;H)}^2 \leq 
\sum_{i=1}^n\int_X|f_{k_i}^{(i)}-f^{(i)}|^2d\nu_\eps+\sum_{i=n+1}^{\infty}\|f^{(i)}\|_{L^2(X,\nu_\eps)}^2
\leq \frac{1}{n}+\frac{\eta}{2}\leq \eta.
\]
In a similar way we can prove the other statements.\\
(ii) is an immediate consequence of Proposition \ref{component} and Theorem \ref{approx theorem}(ii).
\end{proof}

Before going on, recall that usually in the characterisation of functions of bounded variation in terms of the
short-time behaviour of suitable semigroups a crucial tool is an appropriate commutation formula between the semigroup and the gradient operator. For instance, for the Wiener space and the Ornstein--Uhlenbeck semigroup the equality
$D_H S(t)f=e^{-t}\mathbf{S}(t)D_H f$ holds true for any $t \ge 0$.
Let us prove a (quasi) commutation formula between $T_\Om(t)$ and $D_H$, under
the following additional assumption.
\begin{hyp}\label{hyp_d}
The map $(d_\Om)^{-2}\|D_H^2d_\Om^2\|_{{\mathcal H}_{2}}$ belongs to $L^2(X, \nu)$.
\end{hyp}

\begin{remark}
It is not difficult to show that every open ball and every open ellipsoid of $X$ as well as every open hyperplane of $X$ satisfy Hypothesis \ref{hyp_d}. We show that Hypothesis \ref{hyp_d} is satisfied when $\Om$ is the unit ball $B_X$ centered at zero. The other examples can be discussed in a similar fashion. Observe that, by Proposition \ref{prop_dist},
$\|D_H^2d_\Om^2\|_{{\mathcal H}_{2}}\leq 2$ and $\|D_H^2d_\Om^2(x)\|_{{\mathcal H}_{2}}=0$ if $x\in B_X$. Moreover there exists a constant $C>0$ such that
\begin{align*}
d_{B_X}(x)&\geq C\inf\{|h|_X\,|\, h\in H\cap (B_X-x)\}\geq C\inf\{|h|_X\,|\, h\in (B_X-x)\}\\
&= C\inf\{|x-h|_X\,|\, h\in B_X\}=C{\rm dist}(x,B_X)=C||x|_X-1|,
\end{align*}
where ${\rm dist}(x,B_X)$ is the distance of $x$ from $B_X$. So
\begin{align}\label{ball}
&\int_X d_{B_X}^{-4}\|D_H d^2_{B_X}\|^2_{\mathcal{H}_2}d\nu
\leq K\int_{X\smallsetminus B_X}\frac{1}{(|x|_X-1)^4}d\nu(x)
\\
&\leq\! K\!\!\int_{X}\!\frac{1}{(|x|_X-1)^4} e^{-U(x)}d\gamma(x)
\leq K\Bigl(\int_X\! e^{-p'U}d\gamma\Bigr)^{\frac{1}{p'}}\!
\Bigl(\int_{X}\!\frac{1}{(|x|_X-1)^{4p}}d\gamma(x)\Bigr)^{\frac{1}{p}}\notag
\\ \notag
&\leq K\Bigl(\int_X e^{-p'U}d\gamma\Bigr)^{\frac{1}{p'}}
\Bigl(\int_{\R^n}\frac{1}{(|\xi|_{\R^n}-1)^{4p}}d\gamma_n(\xi)\Bigr)^{\frac{1}{p}},
\end{align}
where $K$ is a positive constant and $\gamma_n$ denotes the $n$-dimensional Gaussian measure, image of
$\gamma$ under the projection on ${\rm span}\,\{v_1,\ldots,v_n\}$. To conclude, observe that there exists $n\in\N$
such that the right-hand side of \eqref{ball} is finite.
\end{remark}

\begin{proposition}\label{prop-comm}
Under Hypotheses $\ref{hyp_base}$, $\ref{ipotesi peso}$, $\ref{ipo_convex}$ and $\ref{hyp_d}$, the formula
\begin{align}\label{main}
D_H T_\Omega(t)f &-(e^{-tQ_\infty^{-1}}{\bf T}_{\Omega}(t)D_H f)\notag\\
&\phantom{aaaaaaaa}=-\int_0^te^{(s-t)Q_\infty^{-1}}{\bf T}_\Omega(t-s)(D_H^2 UD_H T_\Omega(s)f)ds.
\end{align}
holds true $\nu$-a.e. in $\Om$, for any $f\in {\rm Lip}_c(\Om)$ and $t>0$.
\end{proposition}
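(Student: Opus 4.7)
The plan is to first derive an analogue of \eqref{main} on the whole space $X$, with the penalised weight $\Phi_\varepsilon=U+\tfrac{1}{2\varepsilon}d_\Omega^2$ and the semigroup $T_\varepsilon(t)$ in place of $U$ and $T_\Omega(t)$, and then let $\varepsilon\to 0^{+}$ along a sequence provided simultaneously by Theorem \ref{approx theorem}(ii) and Theorem \ref{Appr-vect}(ii). I extend any $f\in{\rm Lip}_c(\Omega)$ by zero to $X$ without renaming. The starting ingredient is the pointwise commutator identity on $\mathcal{F}C^\infty_b(X)$,
\[
D_H L_\varepsilon\varphi=\mathbf{L}_\varepsilon D_H\varphi-Q_\infty^{-1}D_H\varphi-D_H^2\Phi_\varepsilon D_H\varphi,
\]
obtained by a direct calculation from the explicit expression of $L_\varepsilon$ together with Proposition \ref{component}; note that $Q_\infty^{-1}$, being diagonal in $\{e_k\}$, commutes with $\mathbf{T}_\varepsilon(t)$ by the same result.

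\noindent\textbf{Duhamel at level $\varepsilon$ and passage to the limit.} For smooth cylindrical $\varphi$ I then differentiate
\[
\Psi_\varepsilon(s):=e^{(s-t)Q_\infty^{-1}}\mathbf{T}_\varepsilon(t-s)D_H T_\varepsilon(s)\varphi,\qquad s\in[0,t].
\]
The three contributions to $\Psi_\varepsilon'(s)$ coming from differentiating the exponential, $\mathbf{T}_\varepsilon(t-s)$ and $T_\varepsilon(s)$ collapse via the commutator identity to
\[
\Psi_\varepsilon'(s)=-e^{(s-t)Q_\infty^{-1}}\mathbf{T}_\varepsilon(t-s)\bigl(D_H^2\Phi_\varepsilon D_H T_\varepsilon(s)\varphi\bigr),
\]
and integrating on $[0,t]$ yields the $\varepsilon$-version of \eqref{main} on $X$. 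This identity is then extended from smooth cylindrical functions to $f\in{\rm Lip}_c(\Omega)$ via Theorem \ref{approx theorem}(i), Theorem \ref{Appr-vect}(i) and the gradient bounds \eqref{gra-est1}--\eqref{gra-est}, which provide the uniform $L^2$-control needed to pass to the limit inside the time integral. Splitting $D_H^2\Phi_\varepsilon=D_H^2 U+\tfrac{1}{2\varepsilon}D_H^2 d_\Omega^2$ decomposes the right-hand side into a $U$-part and a penalisation part; along $\varepsilon_n\to 0^{+}$ the left-hand side and the $U$-part converge weakly in $L^2(\Omega,\nu;H)$ to the corresponding quantities in \eqref{main}, by Theorem \ref{approx theorem}(ii), Theorem \ref{Appr-vect}(ii), and the boundedness of $D_H^2 U$ on $H$ granted by Hypothesis \ref{ipotesi peso}.

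\noindent\textbf{Main obstacle.} The hard part is to show that the penalisation term vanishes on $\Omega$ as $\varepsilon\to 0^{+}$. My approach is to pair it with an arbitrary $\phi\in L^2(\Omega,\nu;H)$ extended to $X$, transfer $\mathbf{T}_\varepsilon(t-s)$ to $\phi$ via the symmetry of $T_\varepsilon$ in $L^2(X,\nu_\varepsilon)$ (the analogue of \eqref{invariance} on $X$, extended componentwise to $\mathbf{T}_\varepsilon$ by Proposition \ref{component}), and then dominate the resulting integrand by combining: Hypothesis \ref{hyp_d}, which gives $d_\Omega^{-2}\|D_H^2 d_\Omega^2\|_{\mathcal{H}_2}\in L^2(X,\nu)$; the elementary uniform bound $\tfrac{d_\Omega^2}{\varepsilon}e^{-d_\Omega^2/(2\varepsilon)}\le C$ together with its pointwise decay to $0$ on $X\setminus\overline{\Omega}$; and the pointwise estimate \eqref{gra-est1} for $|D_H T_\varepsilon(s)f|_H$. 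Dominated convergence then forces the penalisation integral to $0$, closing the argument.
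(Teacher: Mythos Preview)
Your overall strategy coincides with the paper's: derive the identity at level~$\varepsilon$ on all of~$X$ (the paper simply cites \cite{DPL18}; your Duhamel derivation is fine), test against $G\in C_b(\Omega;H)$, and let $\varepsilon\to0$. Your treatment of the penalisation term is also essentially the paper's estimate of its term~$I_2$: the majorant is $(d_\Omega^{-2}\|D_H^2 d_\Omega^2\|_{\mathcal H_2})^2\in L^1(X,\nu)$ (Hypothesis~\ref{hyp_d}) times the bounded factor $\bigl(d_\Omega^2/\varepsilon\bigr)^2 e^{-d_\Omega^2/(2\varepsilon)}$, which tends to~$0$ pointwise on $X\setminus\overline\Omega$, and dominated convergence applies. (Note you need the square, not the first power, of $d_\Omega^2/\varepsilon$ here.)

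The genuine gap is in the $U$-part. You assert that
\[
\mathbf T_\varepsilon(t-s)\bigl(D_H^2U\,D_HT_\varepsilon(s)f\bigr)
\ \rightharpoonup\
\mathbf T_\Omega(t-s)\bigl(D_H^2U\,D_HT_\Omega(s)f\bigr)
\]
weakly in $L^2(\Omega,\nu;H)$ by Theorems~\ref{approx theorem}(ii) and~\ref{Appr-vect}(ii). But those results give weak convergence of $\mathbf T_{\varepsilon_n}(\tau)\tilde F$ only for a \emph{fixed} datum~$F$, whereas here the argument $D_H^2U\,D_HT_\varepsilon(s)f$ itself moves with~$\varepsilon$; a product of two merely weak limits is not controlled, and your duality trick (transferring $\mathbf T_\varepsilon$ onto the test field) does not help, since you then face the same problem on the other side. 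The paper closes this by inserting an auxiliary extension $\Gamma_\eta(s,\cdot)$ of $D_HT_\Omega(s)f$ to~$X$, taken equal to $D_HT_\eta(s)\tilde f$ on $X\setminus\Omega$, and splitting into three pieces: one comparing $D_HT_\varepsilon(s)\tilde f$ with $\Gamma_\varepsilon$ (which vanishes off~$\Omega$ by construction and is handled on~$\Omega$ via \eqref{gra-est} and dominated convergence), the penalisation piece, and a piece with \emph{frozen} argument to which Theorem~\ref{Appr-vect}(ii) legitimately applies. The sequences in~$\varepsilon$ and in~$\eta$ are then synchronised by a diagonal argument using a metric for the weak topology on a bounded set. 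Your sketch omits this decomposition, and without it the convergence of the $U$-part is not justified.
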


\begin{proof}
In order to prove \eqref{main} we show that
\begin{align}\label{aim}
\int_\Om \langle D_H & T_\Omega(t)f ,G\rangle_H d\nu=
\int_\Om \langle e^{-tQ_\infty^{-1}}{\bf T}_{\Omega}(t)D_H f, G\rangle_H d\nu
\notag\\
&-\int_\Om \int_0^t\langle e^{(s-t)Q_\infty^{-1}}({\bf T}_\Omega(t-s)(D_H^2 UD_H T_\Omega(s)f)), G\rangle_H ds d\nu,
\end{align}
for any $f\in {\rm Lip}_c(\Om)$, $G \in C_b(\Om;H)$ and $t>0$. By performing slight changes in \cite[Appendix A]{DPL18} we get
\begin{equation}\label{comm_eps}
D_H T_\eps(t)g -(e^{-tQ_\infty^{-1}}{\bf T}_{\eps}(t)D_H g)
=-\int_0^te^{(s-t)Q_\infty^{-1}}{\bf T}_\eps(t-s)(D_H^2 \Phi_\eps D_H T_\eps(s)g)ds
\end{equation}
$\nu_\varepsilon$-a.e. in $X$ for any $g\in {\rm {\rm Lip}}_b(X)$ and $\eps>0$, where $T_\eps(t)$ is the semigroup 
introduced in Subsection \ref{subsect_sem}.
Now, let $f\in {\rm Lip}_c(\Om)$ and $\tilde{f}$ be the trivial extension to zero of $f$ in the whole space $X$. Clearly, $\tilde{f}$ belongs to ${\rm {\rm Lip}}_b(X)$ and \eqref{comm_eps} holds true with $g$ replaced by $\tilde{f}$. Consequently, multiplying \eqref{comm_eps} by the function $G$ and integrating on $\Om$ with respect to $\nu$ yield
\begin{align}\label{comm_eps_int}
\int_\Om \langle D_H & T_\eps(t)\tilde{f}, G\rangle_H d\nu=\int_\Om \langle e^{-tQ_\infty^{-1}}({\bf T}_{\eps}(t)D_H \tilde{f}) , G\rangle_H d\nu\notag\\
& -\int_\Om\int_0^t\langle e^{(s-t)Q_\infty^{-1}}({\bf T}_\eps(t-s)(D_H^2 \Phi_\eps D_H T_\eps(s)\tilde{f})), G\rangle_H  ds d\nu\notag\\
&\phantom{aaaaaaaaaaaa}=\int_\Om\langle e^{-tQ_\infty^{-1}}{\bf T}_{\eps}(t)D_H \tilde{f}, G\rangle_H d\nu\notag\\
& -\int_0^t\int_\Om \langle e^{(s-t)Q_\infty^{-1}}({\bf T}_\eps(t-s)(D_H^2 \Phi_\eps D_H T_\eps(s)\tilde{f})), G\rangle_H d\nu ds
\end{align}
where in the last line we used the Fubini--Tonelli theorem.

We split the proof of \eqref{aim} in two steps.

\noindent
{\bf Step 1.} We argue by approximation on the last terms in \eqref{comm_eps_int} and \eqref{aim}.\\
For every $\eps,s>0$ we fix a Borel measurable version of $D_HT_\Om(s)f$ and $D_HT_\eps(s)\tilde{f}$  in $L^2(\Omega,\nu;H)$ and  $L^2(X,\nu_\eps;H)$, respectively. Consider the function
\begin{align*}
\Gamma_\eps(s,x):=\eqsys{D_HT_\Om(s)f(x), &x\in\Om;\\
D_HT_\eps(s)\tilde{f}(x), & x\in X\smallsetminus\Om.}
\end{align*}
Observe that the map $x\mapsto \Gamma_\eps(s,x)$ is an extension of $D_HT_\Om(s)f$ to the whole $X$. Thus,
by Theorem \ref{Appr-vect} there is a sequence $\eps_n\downarrow 0$ such that for every $\eta>0$ the
function ${\bf T}_{\eps_n}(t-s)(D_H^2 U\Gamma_\eta(s,\cdot))$ weakly converges to
${\bf T}_{\Om}(t-s)(D_H^2 UD_HT_\Om(s)f)$ in $D^{1,2}(\Omega,\nu;H)$. Observe that the set
\begin{align}\label{insieme}
\{{\bf T}_{\eps_n}(t-s)(D_H^2 U\Gamma_\eta(s,\cdot))\,|\, n\in\N \text{ and }s,\eta>0\}
\end{align}
is bounded in $L^2(\Omega,\nu;H)$. Indeed by the contractivity of ${\bf T}_{\eps_n}(t)$ in the space
$L^2(X,\nu_{\eps_n};H)$, the fact that $U\equiv \Phi_\eps$ on $\Om$ and estimate \eqref{gra-est} we have
\begin{align*}
&\norm{{\bf T}_{\eps_n}(t-s)(D_H^2 U\Gamma_\eta(s,\cdot))}_{L^2(\Omega,\nu;H)}
=\norm{{\bf T}_{\eps_n}(t-s)(D_H^2 U\Gamma_\eta(s,\cdot))}_{L^2(\Omega,\nu_{\eps_n};H)}
\\
&\leq \norm{{\bf T}_{\eps_n}(t-s)(D_H^2 U\Gamma_\eta(s,\cdot))}_{L^2(X,\nu_{\eps_n};H)}
\\
&\leq\norm{D_H^2 U\Gamma_\eta(s,\cdot)}_{L^2(X,\nu_{\eps_n};H)}
\leq [D_HU]_{H\text{-Lip}}\norm{\Gamma_\eta(s,\cdot)}_{L^2(X,\nu_{\eps_n};H)}
\\
&\leq [D_HU]_{H\text{-Lip}} \left(\|D_HT_{\eta}(s)\tilde{f}\|_{L^2(X,\nu_{\eps_n};H)}
+\norm{D_HT_\Om(s)f}_{L^2(\Om,\nu;H)}\right)
\\
&\leq [D_HU]_{H\text{-Lip}} e^{-2\lambda_1^{-1}s}\pa{\|T_{\eta}(s)|D_H\tilde{f}|_H\|_{L^2(X,\nu_{\eps_n})}
+\|T_{\Omega}(s)|D_H f|_H\|_{L^2(\Omega,\nu)}}
\\
&\leq 2[D_HU]_{H\text{-Lip}}(\nu(X))^{\frac{1}{2}}\|D_H f\|_{L^\infty(\Omega,\nu;H)}
\end{align*}
where in the last line we used the contractivity of $T_\eta(t)$ and
$T_\Om(t)$ in $L^\infty$ and the fact that $\nu_{\eps_n}(X)\le \nu(X)$ for any $n\in \N$.
So there exists $M>0$ large enough so that the family in \eqref{insieme} is contained in $B(0,M)$, 
the ball of $L^2(\Omega,\nu;H)$ with center $0$ and radius $M$.

Recall that every bounded subset of $L^2(\Omega,\nu;H)$ is weakly metrisable (see \cite[Proposition 3.106]{FAB1}) and
let $\rho:B(0,M)\times B(0,M)\rightarrow\R$ be a metric such that the topology generated by $\rho$ and the weak topology
in $B(0,M)$ coincide. Now we use a diagonal argument to pass to the limit in \eqref{comm_eps_int}.
Let $n_1\in\N$ such that for every $n\geq n_1$ it holds
\[
\rho\pa{{\bf T}_{\eps_n}(t-s)(D_H^2 U\overline{\Gamma}_1(s,\cdot)), {\bf T}_{\Om}(t-s)(D_H^2 UD_HT_\Om(s)f)}\leq 1.
\]
where $\overline{\Gamma}_j(s,x)= \Gamma_{j^{-1}}(s,x)$ for any $s>0$ and $x\in X$. Now assume that $n_1,\ldots, n_k$ are already constructed and consider $n_{k+1}>n_k$ be such that for every $n\geq n_{k+1}$
\[
\rho\pa{{\bf T}_{\eps_n}(t-s)(D_H^2 U\overline{\Gamma}_{n_k}(s,\cdot)), {\bf T}_{\Om}(t-s)(D_H^2 UD_HT_\Om(s)f)}
\leq \frac{1}{2^k}.
\]
Consider now the sequence $({\bf T}_{\eps_{n_k}}(t-s)(D_H^2 U\overline{\Gamma}_{n_k}(s,\cdot)))_{k\in\N}$ and observe that
it weakly converges to ${\bf T}_{\Om}(t-s)(D_H^2 UD_HT_\Om(s)f)$ in $L^2(\Omega,\nu;H)$ as $k\to \infty$.

\noindent
{\bf Step 2.} To complete the proof, we replace $\eps$ in \eqref{comm_eps_int} by a sequence
$\eps_{m}\downarrow 0$ such that step 1 and Theorems \ref{approx theorem}, \ref{Appr-vect} apply.
Let us show that we can take the limit as $m\to \infty$.
Indeed, from Theorem \ref{approx theorem} it follows that for any $f \in L^2(\Om, \nu)$, $T_{\eps_m}(t) \tilde{f}$ weakly converges (up to a subsequence) to $T_\Omega(t)f$ in $D^{2,2}(\Omega,\nu)$ as $m \to \infty$, hence
writing $T_m, {\bf T}_m, \Phi_m, \Gamma_m$ in place of $T_{\eps_m}, {\bf T}_{\eps_m}, \Phi_{\eps_m}, \Gamma_{\eps_m}$
we obtain 
\begin{gather*}
\lim_{m \to \infty}\int_\Om \langle D_H T_{m}(t)\tilde{f}, G\rangle_H d\nu=
\int_\Om \langle D_H T_\Om(t)f, G\rangle_H d\nu
\end{gather*}
and by the analogous vector-valued result (see Theorem \ref{Appr-vect}, \eqref{comparison} and again \eqref{gra-est1})
\begin{gather*}
\lim_{m \to \infty} \int_\Om\langle e^{-tQ_\infty^{-1}}{\bf T}_{m}(t)D_H \tilde{f}, G\rangle_H d\nu=
\int_\Om \langle e^{-tQ_\infty^{-1}}{\bf T}_{\Omega}(t)D_H f, G\rangle_H d\nu.
\end{gather*}
To conclude we have to prove that the last term in the right hand side of \eqref{comm_eps_int} converges to the last term in the right hand side of \eqref{aim}.
\begin{align*}
&\abs{\int_\Om\!\langle \mathbf{T}_m(t-s)(D^2_H\Phi_m D_HT_m(s)\tilde{f})d\nu
\!-\!\!\int_\Om\!\!{\bf T}_\Omega(t-s)(D_H^2 U D_H T_\Omega(s)f),G\rangle_H d\nu}
\\
&\leq \abs{\int_\Om\langle \mathbf{T}_m(t-s)(D^2_H\Phi_m D_HT_m(s)\tilde{f})
-{\bf T}_m(t-s)(D_H^2 \Phi_m \Gamma_m(s,\cdot)),G\rangle_Hd\nu}
\\
&+ \abs{\int_\Om\langle {\bf T}_m(t-s)(D_H^2 \Phi_m \Gamma_m(s,\cdot))
-{\bf T}_m(t-s)(D_H^2 U \Gamma_m(s,\cdot)),G\rangle_Hd\nu}
\\
&+ \abs{\int_\Om\langle {\bf T}_m(t-s)(D_H^2 U\Gamma_m(s,\cdot))
-{\bf T}_\Omega(t-s)(D_H^2 U D_H T_\Omega(s)f),G\rangle_Hd\nu}
\\
&=:I_1(m)+I_2(m)+I_3(m)
\end{align*}
Let us estimate $I_1$. Using that $\Phi_m\equiv U$ on $\Om$ for every $m\in\N$, formula \eqref{comparison} and the invariance property of $T_m$ with respect to $\nu_m:=\nu_{\eps_m}$ we have
\begin{align}\label{est-i1}
&I_1(m) \leq  \int_\Om \Big|\mathbf{T}_m(t-s)(D^2_H\Phi_m D_HT_m(s)\tilde{f})
-{\bf T}_m(t-s)(D_H^2 \Phi_m \Gamma_m(s,\cdot))\Big|_H\abs{G}_Hd\nu
\notag\\
&\leq  (\nu(X))^{\frac{1}{2}}\norm{G}_\infty\!
\pa{\!\int_\Om \abs{\mathbf{T}_m(t-s)(D^2_H\Phi_m D_HT_m(s)\tilde{f}-D_H^2 \Phi_m \Gamma_m(s,\cdot))}^2_Hd\nu\!}^{\frac{1}{2}}\notag\\
&\leq  (\nu(X))^{\frac{1}{2}}\norm{G}_\infty\!
\pa{\!\int_\Om T_m(t-s)\abs{D^2_H\Phi_m D_HT_m(s)\tilde{f}-D_H^2 \Phi_m \Gamma_m(s,\cdot)}^2_Hd\nu_m\!}^{\frac{1}{2}}
\notag\\
&\leq  (\nu(X))^{\frac{1}{2}}\norm{G}_\infty\!
\pa{\int_X T_m(t-s)\abs{D^2_H\Phi_m D_HT_m(s)\tilde{f}-D_H^2 \Phi_m \Gamma_m(s,\cdot)}^2_Hd\nu_m\!}^{\frac{1}{2}}
\notag\\
&=  (\nu(X))^{\frac{1}{2}}\norm{G}_\infty
\pa{\int_X \abs{(D^2_H\Phi_m D_HT_m(s)\tilde{f})-(D_H^2 \Phi_m \Gamma_m(s,\cdot))}^2_Hd\nu_m}^{\frac{1}{2}}
\notag\\
&\leq (\nu(X))^{\frac{1}{2}}\norm{G}_\infty
\pa{\int_\Omega \|D_H^2 U\|_{\mathcal{H}_2}^2 \abs{D_H T_m(s)\tilde{f}
-D_H T_\Omega(s)f}^2_Hd\nu}^{\frac{1}{2}}
\notag\\
&\leq [D_H U]_{H\text{-Lip}}(\nu(X))^{\frac{1}{2}}\norm{G}_\infty
\pa{\int_\Omega \abs{D_H T_m(s)\tilde{f}-D_H T_\Omega(s)f}^2_Hd\nu}^{\frac{1}{2}}.
\end{align}
The right hand side of \eqref{est-i1} converges to zero as $m\to\infty$: indeed, $D_H T_m(s)\tilde{f}$
converges pointwise $\nu$-almost everywhere in $\Omega$ to $D_H T_\Omega(s)f$. Furthermore, by Proposition \ref{main_properties} we have that
$\nu$-a.e. in $\Omega$
\begin{align*}
&|D_H T_{\eps_m}(s)\tilde{f}-D_HT_\Omega(s)f|^2_H
\leq 2\left(|D_H T_{\eps_m}(s)\tilde{f}|^2_H+|D_HT_\Omega(s)f|^2_H\right)
\\
&\leq 2e^{-2\lambda_1^{-1}s}\pa{T_{\eps_m}(s)|D_H \tilde{f}|_H^2+T_{\Om}(s)|D_H f|_H^2}
\leq 2\norm{D_H f}_{L^\infty(\Omega,\nu;H)}.
\end{align*}
So by the dominated convergence theorem we get that $I_1(m)$ vanishes as $m\to \infty$.
Now, using similar arguments we can estimate $I_2(m)$ as follows
\begin{align}\label{maj}
&I_2(m)\leq (\nu(X))^{\frac{1}{2}}\norm{G}_\infty
\Bigl(\int_X \abs{(D_H^2 \Phi_m \Gamma_m(s,\cdot))-(D_H^2 U \Gamma_m(s,\cdot))}^2_Hd\nu_m\Bigr)^{\frac{1}{2}}
\notag \\
&\leq (\nu(X))^{\frac{1}{2}}\norm{G}_\infty
\Bigl(\int_X \|D_H^2 \Phi_m-D_H^2U\|_{\mathcal{H}_2}^2 |\Gamma_m(s,\cdot)|_H^2d\nu_m\Bigr)^{\frac{1}{2}}
\notag\\
&\leq (\nu(X))^{\frac{1}{2}}\norm{G}_\infty
\Bigl(\int_X \|D_H^2 \Phi_m-D_H^2U\|_{\mathcal{H}_2}^2 |(D_HT_m(s)\tilde{f})\chi_{X\smallsetminus\Omega}
\notag\\
&\phantom{\leq (\nu(X))^{\frac{1}{2}}\norm{G}_\infty\Bigl(\int_X}
+(D_HT_\Omega(s)f)\chi_\Omega|_H^2d\nu_m\Bigr)^{\frac{1}{2}}
\notag\\
&\leq (2\nu(X))^{\frac{1}{2}}\norm{G}_\infty
\Bigl(\int_X \|D_H^2 \Phi_m-D_H^2U\|_{\mathcal{H}_2}^2
\bigl(|D_HT_m(s)\tilde{f}|^2_H
\notag \\
&\phantom{\leq (2\nu(X))^{\frac{1}{2}}\norm{G}_\infty\Bigl(\int_X}
+|D_HT_\Omega(s)f|_H^2\chi_\Omega\bigr)d\nu_m\Bigr)^{\frac{1}{2}}
\notag\\
&\leq(2\nu(X))^{\frac{1}{2}}\norm{G}_\infty
\Bigl(\int_X \|D_H^2 \Phi_m-D_H^2U\|_{\mathcal{H}_2}^2
\bigl(T_m(s)|D_H\tilde{f}|^2_H
\notag \\
&\phantom{\leq(2\nu(X))^{\frac{1}{2}}\norm{G}_\infty\Bigl(\int_X}
+(T_\Omega(s)|D_Hf|_H^2)\chi_\Omega\bigr)d\nu_m\Bigr)^{\frac{1}{2}}
\notag\\
&\leq 2(\nu(X))^{\frac{1}{2}}\norm{D_Hf}_{L^\infty(\Omega,\nu;H)}\norm{G}_\infty
\Bigl(\int_X \|D_H^2 \Phi_m-D_H^2U\|_{\mathcal{H}_2}^2 d\nu_m\Bigr)^{\frac{1}{2}}
\notag\\
&= 2(\nu(X))^{\frac{1}{2}}\norm{D_Hf}_{L^\infty(\Omega,\nu;H)}\norm{G}_\infty\!
\Bigl(\!\int_X\!\! \frac{1}{4\eps_m}
\|D_H^2 d_\Omega^2\|_{\mathcal{H}_2}^2 e^{-U-\frac{1}{2\eps_m}d^2_\Omega}d\gamma\!\Bigr)^{\frac{1}{2}}\!\!.
\end{align}
Now observe that the right hand side of \eqref{maj} vanishes as $m\to \infty$. Indeed the function $\frac{1}{4\eps_m}\|D_H^2 d_\Omega^2\|_{\mathcal{H}_2}^2 e^{-U-\frac{1}{2\eps_m}d^2_\Omega}$ identically vanishes in $\Om$ and converges pointwise to $0$ $\nu$-almost everywhere in $X\setminus \Omega$ as $m\to \infty$. Furthermore let observe that the function
$(0,\infty)\ni \eps \mapsto R(\eps):=\frac{1}{4\eps^2}\abs{D^2_Hd_\Omega^2}^2e^{-\frac{1}{2{\eps}}d_\Omega^2}$ attains its maximum in $\eps=d_\Omega^2/4$ where it equals to $4d_\Omega^{-4}\|D_H^2 d_\Omega^2|_{\mathcal{H}_2}^2$. Thus, using Hypothesis \ref{hyp_d} and applying the dominated convergence theorem we infer that also $I_2(m)$ converges to zero as $m$ goes to infinity.

Finally $I_3(m)$ converges to zero as $m$ goes to infinity thanks to step 1 and this concludes the proof.
\end{proof}

\begin{corollary}\label{s1s2}
Assume Hypotheses $\ref{hyp_base}$, $\ref{ipotesi peso}$, $\ref{ipo_convex}$ and $\ref{hyp_d}$ hold true. For any $t>0$ and $p>1$ there exist two operators ${\bf S}_1(t): L^p(\Omega,\nu;H)\ra L^1(\Omega,\nu;H)$ and $S_2(t): L^p(\Omega,\nu)\ra L^1(\Omega,\nu;H)$ such that for every continuous and $H$-differentiable function $\varphi:\Omega\ra\R$ with 
$H$-Lipschitz gradient
\begin{gather*}
D_HT_\Omega(t)\varphi={\bf S}_1(t)D_H\varphi+S_2(t)\varphi.
\end{gather*}
Moreover, the adjoint operator $({\bf S}_1(t))^*$ maps ${\rm {\rm Lip}}_c(\Omega;H)$ into $L^{\infty}(\Omega,\nu;H)$ and
$\norm{{\bf S}_1^*(t)F}_\infty\leq C_1(t)\norm{F}_\infty$ for any $F\in {\rm {\rm Lip}}_c(\Omega;H)$ with
$C_1(t)\to 1$ as $t\to 0$ and the norm $C_2(t):=\norm{S_2(t)}_{\mathcal{L}(L^p,L^1)}\to 0$ as $t\to 0$.
\end{corollary}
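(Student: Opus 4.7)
The plan is, motivated by the commutation formula \eqref{main}, to define
$$\mathbf{S}_1(t)G:=e^{-tQ_\infty^{-1}}\mathbf{T}_\Omega(t)G,\qquad S_2(t)f:=-\int_0^t e^{(s-t)Q_\infty^{-1}}\mathbf{T}_\Omega(t-s)\bigl(D_H^2U\cdot D_HT_\Omega(s)f\bigr)\,ds,$$
so that the identity $D_HT_\Omega(t)\varphi=\mathbf{S}_1(t)D_H\varphi+S_2(t)\varphi$ is exactly \eqref{main} when $\varphi\in{\rm Lip}_c(\Omega)$. I would then extend the decomposition to the declared class of continuous $H$-differentiable $\varphi$ with $H$-Lipschitz gradient by approximating $\varphi$ with a sequence $(\varphi_n)\subseteq{\rm Lip}_c(\Omega)$ such that $\varphi_n\to\varphi$ in $L^p(\Omega,\nu)$ and $D_H\varphi_n\to D_H\varphi$ in $L^p(\Omega,\nu;H)$ (obtained, for instance, via a cutoff against the regular distance function $d_\Omega$ of Proposition \ref{prop_dist} combined with a cylindrical mollification), and then passing to the limit in the three operators involved.

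For the $L^p\!\to\!L^1$ boundedness of $\mathbf{S}_1(t)$ I would combine the contractivity of $e^{-tQ_\infty^{-1}}$ on $H$ with the $L^p$-contractivity of $\mathbf{T}_\Omega(t)$ on $L^p(\Omega,\nu;H)$: the latter follows from its $L^2$-contractivity, the $L^\infty$-bound $|\mathbf{T}_\Omega(t)G|_H\le\|G\|_{L^\infty(\Omega,\nu;H)}$ deduced from \eqref{comparison1} and Proposition \ref{main_properties}(i), and interpolation (or duality via the symmetry of $\mathbf{T}_\Omega(t)$ to handle $1<p<2$). Hölder's inequality and the finiteness of $\nu$ then give $\|\mathbf{S}_1(t)G\|_{L^1(\Omega,\nu;H)}\le\nu(\Omega)^{1-1/p}\|G\|_{L^p(\Omega,\nu;H)}$. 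For $S_2(t)$ I would use, in addition, the pointwise operator bound $|D_H^2U(x)V|_H\le [D_HU]_{H\text{-Lip}}|V|_H$ (consequence of Hypothesis \ref{ipotesi peso}) and the gradient estimate \eqref{gra-est1}, which yields $\|D_HT_\Omega(s)f\|_{L^p(\Omega,\nu;H)}\le K_p^{1/p}s^{-1/2}\|f\|_{L^p(\Omega,\nu)}$; chaining these bounds under the integral and applying Hölder gives
$$\|S_2(t)f\|_{L^1(\Omega,\nu;H)}\le 2[D_HU]_{H\text{-Lip}}K_p^{1/p}\nu(\Omega)^{1-1/p}\sqrt{t}\,\|f\|_{L^p(\Omega,\nu)},$$
so $C_2(t)=O(\sqrt{t})\to 0$ as $t\to 0^+$.

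For the adjoint estimate, Proposition \ref{component} and the invariance identity \eqref{invariance} together imply that $\mathbf{T}_\Omega(t)$ is symmetric on $L^2(\Omega,\nu;H)$; combined with the self-adjointness of $e^{-tQ_\infty^{-1}}$ on $H$, this gives $\mathbf{S}_1^*(t)F=\mathbf{T}_\Omega(t)e^{-tQ_\infty^{-1}}F$. For $F\in{\rm Lip}_c(\Omega;H)\subseteq L^\infty(\Omega,\nu;H)$, the operator-norm bound $\|e^{-tQ_\infty^{-1}}\|_{\mathcal{L}(H)}\le e^{-t\lambda_1^{-1}}$ (since $\lambda_1^{-1}$ is the smallest eigenvalue of $Q_\infty^{-1}$ on $H$), together with \eqref{comparison1} and the $L^\infty$-contractivity of $T_\Omega(t)$ from Proposition \ref{main_properties}(i), yields $|\mathbf{S}_1^*(t)F|_H\le e^{-t\lambda_1^{-1}}\|F\|_\infty$ $\nu$-a.e., so one takes $C_1(t):=e^{-t\lambda_1^{-1}}$, which tends to $1$ as $t\to 0^+$.

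The main obstacle I anticipate is the approximation step: producing an approximating sequence $(\varphi_n)\subseteq{\rm Lip}_c(\Omega)$ with simultaneous $L^p$-convergence of the functions and of their $H$-gradients is delicate because $\Omega$ is merely convex and no product or local-chart decomposition is available. The key technical ingredient will be the $H$-regularity of $d_\Omega^2$ provided by Proposition \ref{prop_dist}, used to build a global cutoff along $H$ compatible with both the $L^p$-control on $\varphi$ and the $H$-Lipschitz control on $D_H\varphi$, after which a standard cylindrical mollification closes the approximation.
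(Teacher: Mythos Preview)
Your proposal is correct and follows essentially the same approach as the paper: both define $\mathbf{S}_1(t)=e^{-tQ_\infty^{-1}}\mathbf{T}_\Omega(t)$ and $S_2(t)$ as the integral remainder from \eqref{main}, identify $\mathbf{S}_1^*(t)=\mathbf{T}_\Omega(t)e^{-tQ_\infty^{-1}}$ and bound it via the $L^\infty$-contractivity of $T_\Omega(t)$ (through \eqref{comparison1}), and estimate $S_2(t)$ using the pointwise gradient bound \eqref{gra-est1} to obtain the $O(\sqrt{t})$ decay of $C_2(t)$.

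Two minor points of divergence are worth noting. First, in the $S_2$ estimate you control $D_H^2U$ through the pointwise operator-norm bound $\|D_H^2U(x)\|_{\mathcal{L}(H)}\le[D_HU]_{H\text{-Lip}}$, whereas the paper applies H\"older's inequality with the Hilbert--Schmidt norm $\|D_H^2U\|_{\mathcal{H}_2}\in L^{p'}(X,\nu)$; your route uses only what is directly available from Hypothesis~\ref{ipotesi peso} and is arguably cleaner. Second, you explicitly flag the approximation needed to pass from ${\rm Lip}_c(\Omega)$ (where Proposition~\ref{prop-comm} is stated) to the broader class of continuous $H$-differentiable functions with $H$-Lipschitz gradient; the paper's proof does not address this extension at all and simply reads off the decomposition and the bounds. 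Your caution here is well placed, though for the application in Theorem~\ref{bv_dg} only the adjoint identity against ${\rm Lip}_c(\Omega;H)$ test fields is actually used, so the ${\rm Lip}_c$ case already suffices.
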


\begin{proof}
Setting ${\bf S}_1(t):=e^{-tQ_\infty^{-1}}{\bf T}_\Omega(t)$,
\cite[Proposition 1.10]{AFP18} yields that ${\bf S}_1^*(t)={\bf T}_\Omega(t)e^{-tQ_\infty^{-1}}$ maps
${\rm Lip}_c(\Omega;H)$ into $L^{\infty}(\Omega,\nu;H)$ and
\begin{align}\label{1}
\norm{{\bf S}_1^*(t)F}_\infty&=\|(e^{-tQ_\infty^{-1}}{\bf T}_\Omega(t))^*F\|_\infty
\notag \\
&=\|{\bf T}_\Omega(t)e^{-tQ_\infty^{-1}}F\|_\infty\leq |e^{-tQ_\infty^{-1}}|_{{\mathcal{L}}(H)}\norm{F}_\infty.
\end{align}
Moreover, setting $S_2(t):=-\int_0^te^{(s-t)Q_\infty^{-1}}{\bf T}_\Om(t-s)(D_H^2 U D_H T_\Om))ds$,
by the contractivity of $\mathbf{T}_\Omega(t)$ in $L^1(\Omega,\nu; H)$, \eqref{gra-est1}, Hypothesis \ref{ipotesi peso},
the contractivity of $e^{-tQ^{-1}_\infty}$ in $H$, estimate \eqref{comparison1} and the invariance property of
$T_\Omega(t)$, we get
\begin{align}\label{2}
&\norm{{\bf S}_2(t) \varphi}_{L^1(\Omega,\nu;H)}
\le \int_0^t\int_\Omega\abs{e^{(s-t)Q_\infty^{-1}}{\bf T}_\Omega(t-s)(D^2_HUD_HT_\Omega(s)\varphi)}_Hd\nu ds
\notag\\
&\le \int_0^t\left|e^{(s-t)Q_\infty^{-1}}\right|_{{\mathcal{L}}(H)}
\int_\Omega\abs{{\bf T}_\Omega(t-s)(D^2_HUD_HT_\Omega(s)\varphi)}_Hd\nu ds
\notag\\
&\le \int_0^t \int_\Omega\abs{{\bf T}_\Omega(t-s)(D^2_HUD_HT_\Omega(s)\varphi)}_Hd\nu ds
\notag\\
&\le \int_0^t \int_\Omega\abs{D^2_HUD_HT_\Omega(s)\varphi}_Hd\nu ds
\le \int_0^t \int_\Omega \|D^2_HU\|_{\mathcal{H}_2}\abs{D_HT_\Omega(s)\varphi}_Hd\nu ds
\notag\\
&\le \int_0^t \pa{\int_\Omega \|D^2_HU\|^{p'}_{\mathcal{H}_2}d\nu}^{\frac{1}{p'}}
\pa{\int_\Omega\abs{D_HT_\Omega(s)\varphi}^p_Hd\nu}^{\frac{1}{p}} ds
\notag\\
&\le K_p^{\frac{1}{p}}\|D^2_HU\|_{L^{p'}(X,\nu;\mathcal{H}_2)}
\int_0^t s^{-\frac{1}{2}}\pa{\int_\Omega T_\Omega(s)|\varphi|^pd\nu}^{\frac{1}{p}} ds
\notag\\
&\le K^{\frac{1}{p}}_p\|D^2_HU\|_{L^{p'}(X,\nu;\mathcal{H}_2)}\|\varphi\|_{L^p(\Omega,\nu)}
\int_0^t  s^{-\frac{1}{2}} ds
\notag \\
&= 2K^{\frac{1}{p}}_p\sqrt{t}\|D^2_HU\|_{L^{p'}(X,\nu;\mathcal{H}_2)}\|\varphi\|_{L^p(\Omega,\nu)}
\end{align}
for any $t>0$. By the assumption on $U$ we deduce that the operator $S_2(t)$ is bounded from $L^p(\Omega,\nu)$ into $L^1(\Omega,\nu;H)$ for any $t>0$. Finally, estimates \eqref{1} and \eqref{2} allow us to complete the proof.
\end{proof}

Now, we are able to prove the main result of this section.

\begin{theorem}\label{bv_dg}
Assume Hypotheses $\ref{hyp_base}$, $\ref{ipotesi peso}$, $\ref{ipo_convex}$ and $\ref{hyp_d}$ hold true and let $u\in L^2(\Om,\nu)$. The following statement are true:
\begin{enumerate}[\rm (i)]
\item if $\liminf_{t \to 0^+}\| D_H T_\Om (t)u \|_{L^1(\Om,\nu;H)}$ is finite, then $u\in BV(\Omega,\nu)$;\label{first}

\item if $u\in BV(\Omega,\nu)$, then $\limsup_{t \to 0^+}\| D_H T_\Om (t)u \|_{L^1(\Om,\nu;H)}\le |D_\nu u|(\Omega)$.\label{second}
\end{enumerate}
Hence, $u\in BV(\Omega,\nu)$ iff $\displaystyle\lim_{t\ra 0^+}\| D_H T_\Om (t)u \|_{L^1(\Om,\nu;H)}<\infty$.
In this case
\begin{equation}\label{DeG}
|D_\nu u|(\Omega)=\lim_{t\ra 0^+}\| D_H T_\Om (t)u \|_{L^1(\Om,\nu;H)}.
\end{equation}
\end{theorem}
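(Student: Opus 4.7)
For (i), the approach is classical: the strong $L^2(\Om,\nu)$-continuity of $T_\Om(t)$ together with the smoothing property $T_\Om(t)u\in D^{1,2}(\Om,\nu)$ allows, via integration by parts, to rewrite for every test field $g\in{\rm Lip}_c(\Om;F)$ with $F\subset H$ finite-dimensional and $\|g\|_\infty\le 1$
\[
\int_\Om u\,{\rm div}_\nu g\,d\nu =\lim_{t\to 0^+}\int_\Om \langle D_H T_\Om(t)u,g\rangle_H d\nu \le \liminf_{t\to 0^+}\|D_H T_\Om(t)u\|_{L^1(\Om,\nu;H)}.
\]
Taking the sup over $g$ gives $V_\nu(u,\Om)\le\liminf<\infty$, so \eqref{mis=var} yields $u\in BV(\Om,\nu)$ together with $|D_\nu u|(\Om)\le\liminf_{t\to 0^+}\|D_H T_\Om(t)u\|_{L^1}$.

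For the harder direction (ii), I would use Corollary \ref{s1s2} in adjoint form. For $F\in{\rm Lip}_c(\Om;H)$ with $\|F\|_\infty\le 1$, integration by parts and the symmetry \eqref{invariance} give
\[
\int_\Om \langle D_H T_\Om(t)u,F\rangle_H d\nu = \int_\Om T_\Om(t)u\cdot{\rm div}_\nu F\,d\nu = \int_\Om u\cdot T_\Om(t){\rm div}_\nu F\,d\nu.
\]
Testing the identity of Corollary \ref{s1s2} against arbitrary smooth $\varphi$ and once more using \eqref{invariance} produces the $L^2$-identity
\[
T_\Om(t){\rm div}_\nu F = {\rm div}_\nu({\bf S}_1^*(t)F)+(S_2(t))^*F,
\]
with $\|{\bf S}_1^*(t)F\|_{L^\infty(\Om,\nu;H)}\le C_1(t)$ and $\|(S_2(t))^*F\|_{L^{2}(\Om,\nu)}\le C_2(t)$. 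Substituting and applying the $BV$ duality $\abs{\int_\Om u\,{\rm div}_\nu G\,d\nu}\le\|G\|_\infty|D_\nu u|(\Om)$ to the first piece and H\"older to the second, I obtain
\[
\abs{\int_\Om\langle D_H T_\Om(t)u,F\rangle_H d\nu}\le C_1(t)|D_\nu u|(\Om)+C_2(t)\|u\|_{L^2(\Om,\nu)}.
\]
Taking the supremum over $F$ in the unit ball of ${\rm Lip}_c(\Om;H)$ (which is enough for the $L^1$--$L^\infty$ duality used to compute $\|D_H T_\Om(t)u\|_{L^1}$) and letting $t\to 0^+$, with $C_1(t)\to 1$ and $C_2(t)\to 0$, yields $\limsup_{t\to 0^+}\|D_H T_\Om(t)u\|_{L^1}\le |D_\nu u|(\Om)$. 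The equivalence and the identity \eqref{DeG} then follow at once by combining (i) and (ii).

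The main technical obstacle is precisely the $BV$-duality step, because the vector field $G:={\bf S}_1^*(t)F={\bf T}_\Om(t)e^{-tQ_\infty^{-1}}F$, although $D^{1,2}$-regular and bounded in $L^\infty$ by $C_1(t)$, does not inherit the compact support of $F$ in $\Om$ since the semigroup ${\bf T}_\Om(t)$ does not preserve compact supports. I would bypass this by approximating $G$ by $\chi_n G\in{\rm Lip}_c(\Om;H)$, where $\chi_n$ is a sequence of Lipschitz cutoffs with $0\le\chi_n\le 1$ and $\chi_n\uparrow 1$ on $\Om$, constructed from the $H$-distance $d_{X\smallsetminus\Om}$ available through Proposition \ref{prop_dist} and a truncation in the $X$-norm; applying the $BV$ test to $\chi_n G$ and using the uniform bound $\|\chi_n G\|_\infty\le C_1(t)$, dominated convergence, and the control of the Leibniz boundary term $\int_\Om u\langle G,D\chi_n\rangle_H d\nu$ (which concentrates on the shrinking set $\{0<\chi_n<1\}$) permits passing to the limit and closing the estimate.
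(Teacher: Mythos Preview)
Your overall strategy coincides with the paper's: part (i) via lower semicontinuity of the variation under $L^2$-convergence of $T_\Om(t)u$, and part (ii) by dualising $\|D_HT_\Om(t)u\|_{L^1}$ against $F\in{\rm Lip}_c(\Om;H)$ and invoking Corollary~\ref{s1s2} in adjoint form to split into a term controlled by $|D_\nu u|(\Om)\|{\bf S}_1^*(t)F\|_\infty$ and a remainder controlled by $C_2(t)\|u\|_{L^2}$. The paper writes exactly this chain of inequalities, and simply records $\int_\Om u\,D_H^*{\bf S}_1^*(t)F\,d\nu\le |D_\nu u|(\Om)\|{\bf S}_1^*(t)F\|_\infty$ without further comment; you are right to flag that ${\bf S}_1^*(t)F={\bf T}_\Om(t)e^{-tQ_\infty^{-1}}F$ is not in ${\rm Lip}_c(\Om;H)$, so this step needs an extension of the $BV$ integration-by-parts beyond compactly supported Lipschitz test fields.

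Your proposed cutoff repair, however, does not close the gap as written. First, Proposition~\ref{prop_dist} concerns $d_\Om$, the $H$-distance \emph{to} the convex set $\Om$; the function $d_{X\setminus\Om}$ you invoke is the $H$-distance to the (non-convex) complement, and no regularity for it is established in the paper. Second, and more seriously, any cutoff with $\chi_n\uparrow 1$ on $\Om$ and support shrinking to $\partial\Om$ forces $|D_H\chi_n|\sim n$ on the transition layer, so the Leibniz term obeys only $\bigl|\int_\Om u\langle G,D_H\chi_n\rangle_H d\nu\bigr|\lesssim n\,\|G\|_\infty\|u\|_{L^2}\,\nu(\{0<\chi_n<1\})^{1/2}$; making this vanish would require $\nu(\{d_{X\setminus\Om}<1/n\})=o(n^{-2})$, a boundary regularity not contained in Hypotheses~\ref{ipo_convex}--\ref{hyp_d}. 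The honest way to complete the step is not via cutoffs but via an extension of the $BV$ duality: one shows (using that $G={\bf S}_1^*(t)F$ lies in ${\rm Dom}({\rm div}_\nu)\cap L^\infty(\Om,\nu;H)$, which follows from the very identity $T_\Om(t){\rm div}_\nu F={\rm div}_\nu G+S_2^*(t)F$ you derived) that the pairing $\int_\Om u\,{\rm div}_\nu G\,d\nu=-\int_\Om\langle G,dD_\nu u\rangle_H$ persists for such $G$, by approximating $G$ in $D^{1,2}(\Om,\nu;H)$ by finite-rank cylindrical fields with uniformly bounded $L^\infty$ norm (truncating components and projecting onto eigenspaces of $Q_\infty$), rather than by spatially cutting off near $\partial\Om$.
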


\begin{proof}
\eqref{first} follows from the strong continuity of $T_{\Om}(t)$ in $L^1(\Om, \nu)$,
see Proposition \ref{main_properties}(i), and the lower semicontinuity of the norm \eqref{mis=var}, which imply
\[
|D_\nu u|(\Omega)\le\liminf_{t\to 0^+}\int_\Omega |D_H T_\Omega(t)u|_Hd\nu.
\]
To prove \eqref{second} we write the $L^1$-norm of the gradient of $T_\Om(t)u$ by duality, as
\[
\|D_H T_\Omega(t)u\|_{L^1(\Om, \nu;H)}=
\sup\left\{\int_\Om \langle D_H T_\Omega(t)u,F \rangle_H d\nu:
\begin{array}{l}
F\in {\rm Lip}_c(\Om;H),\\
\|F\|_\infty\le 1
\end{array}\right\}.
\]
Taking into account that, for any $F\in {\rm Lip}_b(\Om;H)$ we get
\begin{align*}
&\int_\Om \langle D_H T_\Omega(t)u,F \rangle_H d\nu
= \int_\Om u  (D_H T_\Omega(t))^*F  d\nu
\le \int_\Om u  \Big({\bf S}_1(t)D_H+S_2(t)\Big)^*F  d\nu
\\
&= \int_\Om u  (D_H^*{\bf S}_1(t)^*F +S_2(t)^*F)  d\nu
\le |D_\nu u|(\Omega)\|{\bf S}_1(t)^*F\|_{\infty}+ \|u\|_{L^2(\Om,\nu)}\|S_2(t)^*F\|_{L^2(\Om,\nu)}
\\
& \le (|D_\nu u|(\Omega)C_1(t)+C_2(t))\|F\|_\infty
\end{align*}
we deduce that
\begin{equation}\label{gra-var}
\|D_H T_\Omega(t)u\|_{L^1(\Om, \nu;H)}\le C_1(t)|D_\nu u|(\Omega)+C_2(t)
\end{equation}
for any $t>0$ where $C_i$ ($i=1,2$) are the positive functions in Corollary \ref{s1s2}. Thus, taking the
limsup as $t\to 0^+$ in \eqref{gra-var} we get
\begin{equation*}
\limsup_{t\to 0^+}\int_\Omega |D_H T_\Omega(t)u|_Hd\nu\le |D_\nu u|(\Omega).
\end{equation*}
and the proof is complete.
\end{proof}

It follows from Theorem \ref{bv_dg} that functions in $BV(\Om,\nu)$ may be approximated
{\em in variation} by smooth functions. This result was already known in infinite dimension when
$\Om=X$ and $T_\Om(t)$ is the Ornstein--Uhlenbeck semigroup and in a convex set, see \cite{LMP},
where the approximation is based on finite dimensional reductions of the semigroup generated by the
Neumann Ornstein--Uhlenbeck operator in $\Om$.

\begin{proposition}\label{densita}
Under Hypotheses $\ref{hyp_base}$, $\ref{ipotesi peso}$, $\ref{ipo_convex}$ and $\ref{hyp_d}$, for any
$f\in BV(\Om,\nu)$ there exists a sequence $(f_n)_{n\in\N}\subseteq D^{1,2}(\Om,\nu)$
such that
\begin{equation}\label{approx_var}
{\rm (i)}\,\lim_{n\to \infty}\|f_n-f\|_{L^2(\Om,\nu)}=0\ \ {\rm and}\ \ 
{\rm(ii)}\,\lim_{n\to \infty}\int_{\Om}|D_H f_n|_Hd\nu=|D_\nu f|(\Om).
\end{equation}
If $C\subseteq \Om$ is closed and $|D_\nu f|(\partial C)=0$ then
$\displaystyle |D_\nu f|(C)=\lim_{n\to \infty}\int_{C}|D_H f_n|_Hd\nu.$
\end{proposition}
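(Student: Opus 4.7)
The approximation is the natural one suggested by the De Giorgi characterisation of Theorem \ref{bv_dg}: set $f_n := T_\Om(t_n) f$ for an arbitrary infinitesimal sequence $t_n \downarrow 0$. The analyticity of $T_\Om(t)$ on $L^2(\Om,\nu)$ (equivalently, the gradient estimate \eqref{gra-est1}) gives $T_\Om(t) f \in D^{1,2}(\Om,\nu)$ for every $t > 0$, so $(f_n)_{n\in\N} \subseteq D^{1,2}(\Om,\nu)$. Property \eqref{approx_var}(i) is immediate from the strong continuity of $T_\Om(t)$ in $L^2(\Om,\nu)$ stated in Proposition \ref{main_properties}(i). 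Property \eqref{approx_var}(ii) is precisely the content of Theorem \ref{bv_dg} applied to $f \in BV(\Om,\nu)$, since the limit as $t \to 0^+$ exists and equals $|D_\nu f|(\Om)$.

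For the local claim, the key fact is that for every open set $A \subseteq \Om$ the functional $u \mapsto V_\nu(u, A)$ is lower semicontinuous with respect to $L^2(\Om,\nu)$-convergence: every test function $g \in {\rm Lip}_c(A; F)$ extends by zero to an element of ${\rm Lip}_c(\Om; F)$ (because ${\rm dist}(\supp g, X \setminus \Om) \geq {\rm dist}(\supp g, X \setminus A) > 0$), and each linear functional $u \mapsto \int_\Om u \, {\rm div}_\nu g \, d\nu$ is $L^2$-continuous as ${\rm div}_\nu g \in L^2(\Om,\nu)$ under Hypothesis \ref{ipotesi peso}. Combining this with \eqref{mis=var} and \eqref{approx_var}(i), for every open $A \subseteq \Om$
\begin{equation*}
|D_\nu f|(A) \leq \liminf_{n \to \infty} \int_A |D_H f_n|_H \, d\nu.
\end{equation*}

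Now apply this estimate to the two disjoint open sets $A_1 := \mathring{C}$ and $A_2 := \Om \setminus C$. The hypothesis $|D_\nu f|(\partial C) = 0$ gives $|D_\nu f|(A_1) = |D_\nu f|(C)$ and $|D_\nu f|(A_2) = |D_\nu f|(\Om) - |D_\nu f|(C)$. Setting $a_n := \int_C |D_H f_n|_H \, d\nu$ and $b_n := \int_{\Om \setminus C} |D_H f_n|_H \, d\nu$, and using $a_n \geq \int_{\mathring{C}} |D_H f_n|_H \, d\nu$, we obtain $\liminf_{n} a_n \geq |D_\nu f|(C)$ and $\liminf_{n} b_n \geq |D_\nu f|(\Om) - |D_\nu f|(C)$. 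Since $a_n + b_n \to |D_\nu f|(\Om)$ by \eqref{approx_var}(ii), this forces $\limsup_{n} a_n \leq |D_\nu f|(\Om) - \liminf_{n} b_n \leq |D_\nu f|(C)$, and hence $\lim_{n} a_n = |D_\nu f|(C)$, as required. The only substantive ingredient is Theorem \ref{bv_dg}, already proved; the mild technical point, namely the $L^2$-lower semicontinuity of $V_\nu(\cdot, A)$ on open subsets, is handled by the extension-by-zero remark above, and the rest is a standard Portmanteau-type decomposition argument across a boundary of vanishing measure.
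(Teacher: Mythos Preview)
Your proof is correct and follows essentially the same route as the paper: the approximants are $f_n=T_\Om(t_n)f$, properties \eqref{approx_var}(i)--(ii) come from strong continuity and Theorem~\ref{bv_dg}, and the local statement is obtained from lower semicontinuity of the variation on open subsets (which the paper cites from \cite[Corollary~2.5]{LMP} while you supply the extension-by-zero argument directly) together with the complementation/Portmanteau splitting across $\partial C$. The only cosmetic difference is the order in which the $\liminf$/$\limsup$ inequalities are assembled.
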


\begin{proof}
Consider the semigroup $T_\Om(t)$ generated in $L^2(\Om,\nu)$ by the operator
$L_\Om$ defined in \eqref{defn_LOm}. It is known that for any $f\in L^2(\Om, \nu)$ the function $T_\Om(t)f$ belongs
to $D^{1,2}(\Om,\nu)$ for any $t>0$ and by the strong continuity of $T_\Om(t)$, $T_\Om(t)f$ converges to $f$ in
$L^2(\Om,\nu)$ as $t\to 0^+$. Moreover, Theorem \ref{bv_dg} implies that $\|D_H T_\Om(t)f\|_{L^1(\Om,\nu;H)}$ converges to
$|D_\nu f|(\Om)$ as $t \to 0^+$. Thus \eqref{approx_var} is proved.
To complete the proof let us observe that, by the lower semicontinuity of the total variation, for any open set $A\subseteq \Om$
\begin{equation}\label{aperti}
|D_\nu f|(A)\le \liminf_{n\to \infty}\int_{A}|D_H f_n|_Hd\nu
\end{equation}
(see \cite[Corollary 2.5]{LMP}). Analogously we deduce that
\begin{equation}\label{chiusi}|D_\nu f|(C)\ge \limsup_{n\to \infty}\int_{C}|D_H f_n|_Hd\nu
\end{equation}
for any closed subset $C\subseteq \Om$. Indeed, by \eqref{aperti} we obtain
\begin{align*}
&|D_\nu f|(\Om)-|D_\nu f|(C)=|D_\nu f|(\Om\setminus C)
\le \liminf_{n\to \infty}\int_{\Om\setminus C}|D_H f_n|_Hd\nu
\\
&= \liminf_{n\to \infty}\left(\int_\Om|D_H f_n|_Hd\nu-\int_C|D_H f_n|_Hd\nu\right)
\\
&= \lim_{n\to \infty}\int_{\Om}|D_H f_n|_Hd\nu-\limsup_{n \to \infty}\int_C|D_H f_n|_Hd\nu
\end{align*}
whence, using \eqref{approx_var}(ii), estimate \eqref{chiusi} follows. Now, using estimates \eqref{aperti},
\eqref{chiusi} and the fact that $|D_\nu f|(\partial C)=0$ we obtain
\begin{align}\label{ahahaha}
|D_\nu f|(C)=|D_\nu f|(\mathring{C})&\le  \liminf_{n\to \infty}\int_{\mathring{C}}|D_H f_n|_Hd\nu\notag\\
&\le \limsup_{n\to \infty}\int_{\mathring{C}}|D_H f_n|_Hd\nu\le |D_\nu f|(C),
\end{align}
where $\mathring{C}$ denotes the interior of $C$. Estimate \eqref{ahahaha} yields the claim.
\end{proof}

We conclude this section showing that estimate \eqref{gra-est} and the previous approximation result allow to improve
estimate \eqref{gra-var} obtaining \eqref{est}.
\begin{theorem}
Under Hypotheses $\ref{hyp_base}$, $\ref{ipotesi peso}$, $\ref{ipo_convex}$ and $\ref{hyp_d}$, if
$f\in BV(\Om, \nu)$ then
\begin{equation}\label{est}
\int_\Om |D_H T_\Om(t)f|_H d\nu\le e^{-\lambda_1^{-1} t}|D_\nu f|(\Om),\qquad\;\, t>0,
\end{equation}
$\lambda_1$ being the maximum eigenvalue of the covariance operator $Q_\infty$, see \eqref{qinfty}. Moreover,
for any open set $A\subset \Om$ with $\overline{A}\subset \Omega$,
\begin{equation*}
\lim_{t \to 0^+}\int_A |D_H T_\Om(t)f|_H d\nu=|D_\nu f|(A).
\end{equation*}
\end{theorem}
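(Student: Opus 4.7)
The plan is to first establish the global estimate \eqref{est} by combining the smooth approximation from Proposition \ref{densita} with the pointwise gradient bound \eqref{gra-est} and the invariance of $\nu$ under $T_\Om(t)$, and then to deduce the local limit on $A$ from two applications of lower semicontinuity, one on $A$ itself and one on the open complement $\Om\setminus\overline{A}$, combined with the global bound just proved.

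For \eqref{est}, let $(f_n)_{n\in\N}\subseteq D^{1,2}(\Om,\nu)$ be the sequence supplied by Proposition \ref{densita}, so that $f_n\to f$ in $L^2(\Om,\nu)$ and $\int_\Om |D_H f_n|_H d\nu\to |D_\nu f|(\Om)$. Applying \eqref{gra-est} with $p=1$ to $f_n$ yields
\begin{equation*}
|D_H T_\Om(t)f_n|_H\le e^{-\lambda_1^{-1}t}T_\Om(t)|D_H f_n|_H\qquad \nu\text{-a.e. in }\Om.
\end{equation*}
Integrating over $\Om$ and observing that $L_\Om 1=0$ implies $T_\Om(t)1=1$, so that \eqref{invariance} with $f\equiv 1$ gives $\int_\Om T_\Om(t)g\,d\nu=\int_\Om g\,d\nu$ for every $g\in L^1(\Om,\nu)$, one obtains
\begin{equation*}
\int_\Om|D_HT_\Om(t)f_n|_Hd\nu\le e^{-\lambda_1^{-1}t}\int_\Om|D_Hf_n|_Hd\nu.
\end{equation*}
The passage to the limit $n\to\infty$ on the left-hand side is handled by \eqref{gra-est1} applied to $f_n-f$ with $p=2$:
\begin{equation*}
\|D_H T_\Om(t)(f_n-f)\|_{L^2(\Om,\nu;H)}^2\le K_2 t^{-1}\|f_n-f\|_{L^2(\Om,\nu)}^2 \to 0,
\end{equation*}
so $D_HT_\Om(t)f_n\to D_HT_\Om(t)f$ in $L^2(\Om,\nu;H)$ and, since $\nu$ is finite, in $L^1(\Om,\nu;H)$. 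Combining with \eqref{approx_var}(ii) yields \eqref{est}.

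For the local limit, the lower bound is immediate from lower semicontinuity of the total variation on the open set $A$ with respect to $L^1$-convergence: by strong continuity of $T_\Om(t)$ in $L^1(\Om,\nu)$ (Proposition \ref{main_properties}(i)) one has $T_\Om(t)f\to f$ in $L^1(\Om,\nu)$ as $t\to 0^+$, and since $T_\Om(t)f\in D^{1,2}(\Om,\nu)$, \eqref{mis=var} identifies $V_\nu(T_\Om(t)f,A)$ with $\int_A|D_HT_\Om(t)f|_H d\nu$, hence
\begin{equation*}
|D_\nu f|(A)\le\liminf_{t\to 0^+}\int_A|D_HT_\Om(t)f|_Hd\nu.
\end{equation*}
The matching upper bound is obtained by applying the same semicontinuity on the open set $\Om\setminus\overline{A}$ and using \eqref{est}: writing $\int_A|D_HT_\Om(t)f|_Hd\nu\le \int_\Om|D_HT_\Om(t)f|_Hd\nu-\int_{\Om\setminus\overline{A}}|D_HT_\Om(t)f|_Hd\nu$, one deduces
\begin{equation*}
\limsup_{t\to 0^+}\int_A|D_HT_\Om(t)f|_Hd\nu\le |D_\nu f|(\Om)-|D_\nu f|(\Om\setminus\overline{A})=|D_\nu f|(\overline{A}),
\end{equation*}
which matches $|D_\nu f|(A)$ precisely when $|D_\nu f|(\partial A)=0$, a natural restriction on $A$ (implicit in the statement, analogous to the assumption in Proposition \ref{densita}). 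I expect the main obstacle to be precisely this boundary compatibility: the gradient estimates propagate sharply through the semigroup, so the delicate point is matching the variation measure on $A$ with its value on $\overline{A}$, which requires negligibility of $\partial A$ for $|D_\nu f|$.
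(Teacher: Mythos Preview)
Your proposal is correct and follows essentially the same route as the paper. For \eqref{est}, the paper passes to the limit in $n$ via lower semicontinuity of the total variation (so that $\int_\Om|D_HT_\Om(t)f|_H\,d\nu\le\liminf_n\int_\Om|D_HT_\Om(t)f_n|_H\,d\nu$) rather than through your direct $L^2$-convergence of gradients via \eqref{gra-est1}; both arguments then conclude by \eqref{gra-est} and invariance. For the local limit, the paper simply invokes Proposition \ref{densita} applied to the approximating family $(T_\Om(t)f)_{t>0}$, which satisfies \eqref{approx_var} by Theorem \ref{bv_dg}; your lower-semicontinuity-on-$A$ plus complement argument is exactly the mechanism inside that proposition, unfolded. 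Your observation that one needs $|D_\nu f|(\partial A)=0$ to match $|D_\nu f|(A)$ with $|D_\nu f|(\overline A)$ is well taken: the paper's appeal to Proposition \ref{densita} carries precisely this hypothesis, even though it is not restated in the theorem.
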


\begin{proof}
Let $f\in BV(\Om,\nu)$ and let $(f_n)_{n\in\N}\in D^{1,2}(\Om,\nu)$ be the sequence given by Proposition \ref{densita}.
By the contractivity of $T_\Om(t)$ we deduce that $T_\Om(t)f_n$ converges to $T_\Om(t)f$ in $L^2(\Om, \nu)$ as
$n\to\infty$. This fact, together with the lower semicontinuity of the total variation, \eqref{gra-est} and \eqref{invariance} yield
\begin{align*}
&\int_{\Om}|D_H T_\Om(t)f|_Hd\nu\le
\liminf_{n\to \infty}\int_{\Om}|D_H T_\Om(t)f_n|_Hd\nu
\\
&\leq e^{-\lambda_1^{-1} t}\liminf_{n\to \infty}\int_\Om T_\Omega(t)|D_H f_n|_H d\nu
\le e^{-\lambda_1^{-1} t}\lim_{n\to \infty}\int_\Om |D_H f_n|_H d\nu
\\
&= e^{-\lambda_1^{-1} t}|D_\nu f|(\Om)
\end{align*}
whence \eqref{est} is proved. The last assertion follows immediately from Proposition \ref{densita} taking into account that $T_\Om(t)f$ satisfies \eqref{approx_var}.
\end{proof}

\section{Sets of finite perimeter in $\Om$}

This section is devoted to provide some sufficient and necessary conditions in order that a Borel set $E \subseteq X$
have finite perimeter in $\Om$. We consider also the case of $BV(\Om,\nu)$ functions and $\Om=X$.
There are three semigroups involved: beside $T_\Omega(t)$, we consider the Ornstein-Uhlenbeck semigroup
$S(t)$ generated in $L^2(X, \gamma)$ by the realisation of the operator
\begin{equation*}
L_{OU}\varphi = {\rm Tr}(D^2_H \varphi)-\sum_{i=1}^{\infty}\lambda_i^{-1}\langle x,e_i\rangle D_i\varphi
\quad \varphi \in  \F C^2_b(X)
\end{equation*}
and the semigroup $T(t)$ generated in $L^2(X,\nu)$ by the realisation of the operator
\begin{equation}\label{op-L}
L\varphi = L_{OU}\varphi
-\langle D_HU,D_H\varphi\rangle_H ,
\quad \varphi\in \F C^2_b(X) .
\end{equation}
Recall that $S(t)$ admits a pointwise representation by means of the Mehler formula \eqref{Mehler}.

\begin{theorem}\label{thm_nec}
Assume Hypotheses $\ref{hyp_base}$, $\ref{ipotesi peso}$, $\ref{ipo_convex}$ and $\ref{hyp_d}$ hold true and let $E\subseteq X$ be a Borel set such that $P_\nu(E,\Om )<\infty$. Then
\begin{equation}\label{cond-nec}
\limsup_{t \to 0^+}\frac{1}{\sqrt{t}}\|T_\Om(t)\chi_E-\chi_E\|_{L^1(\Om, \nu)}<\infty.
\end{equation}
More precisely
\begin{equation}\label{est*}
\|T_\Om(t)u-u\|_{L^1(\Om, \nu)}\le 2 \sqrt{K_2t}|D_\nu u|(\Omega)
\end{equation}
for any $u \in BV(\Om, \nu)$ and $t>0$ where $K_2$ is the constant in \eqref{gra-est1}.
\end{theorem}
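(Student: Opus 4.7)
My plan is to prove the quantitative estimate \eqref{est*}; the $\limsup$ condition \eqref{cond-nec} then follows at once by taking $u=\chi_E$, since $|D_\nu \chi_E|(\Om)=P_\nu(E,\Om)<\infty$. The strategy is a duality argument combined with the pointwise gradient bound \eqref{gra-est1}, first for smooth $u$, then by approximation via Proposition \ref{densita}.

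First I would establish \eqref{est*} for $u\in D^{1,2}(\Om,\nu)$. By $L^1$--$L^\infty$ duality,
\[
\|T_\Om(t)u-u\|_{L^1(\Om,\nu)}=\sup\Bigl\{\int_\Om g(T_\Om(t)u-u)\,d\nu\colon g\in L^\infty(\Om,\nu),\ \|g\|_\infty\le 1\Bigr\}.
\]
By the symmetry formula \eqref{invariance} from Proposition \ref{main_properties}(ii) I rewrite the pairing as $\int_\Om u(T_\Om(t)g-g)\,d\nu$. Since $T_\Om$ is analytic, $T_\Om(s)g\in D(L_\Om)\subseteq D^{1,2}(\Om,\nu)$ for every $s>0$, so $T_\Om(t)g-T_\Om(\eps)g=\int_\eps^t L_\Om T_\Om(s)g\,ds$ as a Bochner integral in $L^2(\Om,\nu)$ for $0<\eps<t$. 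Using the defining form of $L_\Om$ in \eqref{defn_LOm} extended by density from $\mathcal{F}C^\infty_b(\Om)$ to $D^{1,2}(\Om,\nu)$,
\[
\int_\Om u\,L_\Om T_\Om(s)g\,d\nu=-\int_\Om\gen{D_H u,D_H T_\Om(s)g}_H d\nu,\qquad s>0.
\]

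Next I apply \eqref{gra-est1} with $p=2$: since $\|g\|_\infty\le 1$ and $T_\Om(s)$ is a contraction on $L^\infty(\Om,\nu)$, one has $|D_H T_\Om(s)g|_H^2\le K_2 s^{-1}T_\Om(s)|g|^2\le K_2/s$ $\nu$-a.e. in $\Om$, so
\[
\Bigl|\int_\Om\gen{D_H u,D_H T_\Om(s)g}_H d\nu\Bigr|\le \sqrt{K_2/s}\,\|D_H u\|_{L^1(\Om,\nu;H)},
\]
a bound which is integrable in $s$ on $[0,t]$. Letting $\eps\to 0^+$, the strong $L^2$-continuity of $T_\Om$ at $0^+$ takes care of the left-hand side, while dominated convergence handles the right-hand side; this yields
\[
\Bigl|\int_\Om g(T_\Om(t)u-u)\,d\nu\Bigr|\le \int_0^t\sqrt{K_2/s}\,ds\cdot\int_\Om|D_H u|_H d\nu=2\sqrt{K_2 t}\int_\Om|D_H u|_H d\nu.
\]
Taking the supremum over admissible $g$ proves \eqref{est*} when $u\in D^{1,2}(\Om,\nu)$.

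To extend to $u\in BV(\Om,\nu)$ I invoke Proposition \ref{densita} to obtain a sequence $(u_n)\subseteq D^{1,2}(\Om,\nu)$ with $u_n\to u$ in $L^2(\Om,\nu)$ and $\int_\Om|D_H u_n|_H d\nu\to|D_\nu u|(\Om)$. Applying the estimate already proved to each $u_n$ and letting $n\to\infty$, the left-hand side converges to $\|T_\Om(t)u-u\|_{L^1(\Om,\nu)}$ (thanks to the finiteness of $\nu$, which gives $L^2\hookrightarrow L^1$, and the $L^1$-contractivity of $T_\Om(t)$ in Proposition \ref{main_properties}(i)), while the right-hand side converges to $2\sqrt{K_2 t}|D_\nu u|(\Om)$. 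This yields \eqref{est*}, and \eqref{cond-nec} then follows by specialising to $u=\chi_E$.

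The main technical hurdle I anticipate lies in the identity $T_\Om(t)g-T_\Om(\eps)g=\int_\eps^t L_\Om T_\Om(s)g\,ds$ together with the subsequent Fubini interchange and the passage $\eps\to 0^+$: because $\|L_\Om T_\Om(s)g\|_{L^2(\Om,\nu)}$ has a non-integrable $s^{-1}$ singularity at $0$, the naive Bochner integral $\int_0^t L_\Om T_\Om(s)g\,ds$ need not converge. This is exactly where \eqref{gra-est1} becomes crucial: the singularity is tamed \emph{after} pairing with $D_H u$, downgrading the blow-up from $s^{-1}$ to the integrable $s^{-1/2}$, so that dominated convergence applies.
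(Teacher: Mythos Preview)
Your proof is correct and follows essentially the same duality argument as the paper: pair $T_\Om(t)u-u$ against $g\in L^\infty$, use the form identity $\int_\Om u\,L_\Om T_\Om(s)g\,d\nu=-\int_\Om\gen{D_Hu,D_HT_\Om(s)g}_H d\nu$, invoke the pointwise gradient bound \eqref{gra-est1} with $p=2$ and the $L^\infty$-contractivity of $T_\Om$, integrate the resulting $s^{-1/2}$ singularity, and then pass from $D^{1,2}(\Om,\nu)$ to $BV(\Om,\nu)$ via Proposition~\ref{densita}. The only cosmetic difference is that the paper expands $T_\Om(t)u-u=\int_0^t L_\Om T_\Om(s)u\,ds$ first and then transfers $L_\Om T_\Om(s)$ onto $g$ by self-adjointness, whereas you first swap $u$ and $g$ via \eqref{invariance} and then expand $T_\Om(t)g-g$; your explicit treatment of the $\eps\to 0^+$ limit is in fact more careful than the paper's presentation, which writes $\int_0^t$ directly.
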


\begin{proof}
Clearly, once estimate \eqref{est*} is proved, \eqref{cond-nec} follows at once choosing $u=\chi_E$. Thus, let us prove \eqref{est*}. To this aim, we consider $g\in L^\infty(\Om, \nu)$ and assume first that $u \in D^{1,2}(\Om,\nu)$. By the self-adjointness of the operators $L_\Om T_\Om(s)$ in $L^2(\Om,\nu)$ we have
\begin{align*}
&\int_\Om g(T_\Om(t)u-u)d\nu=
\int_\Om g\int_0^t \frac{d}{ds}T_\Om(s)u ds d\nu
= \int_0^t \int_\Om g (L_\Om T_\Om(s)u) d\nu ds
\notag\\
&= \int_0^t \int_\Om (L_\Om T_\Om(s)g) u  d\nu ds
= -\int_0^t \int_\Om \langle D_H T_\Om(s)g, D_H u\rangle_H d\nu ds.
\end{align*}
The Cauchy--Schwarz inequality and \eqref{gra-est1} yield
\begin{align*}
\int_\Om g(T_\Om(t)u-u)d\nu &\leq \int_0^t \int_\Om |D_H T_\Om(s)g|_H |D_H u|_H  d\nu ds\\
&=\int_0^t \int_\Om (|D_H T_\Om(s)g|^2_H)^{\frac{1}{2}} |D_H u|_H  d\nu ds\\
&\leq \sqrt{K_2}\int_0^t s^{-\frac{1}{2}}\int_\Om (T_\Om(s)|g|^2)^{\frac{1}{2}} |D_H u|_H  d\nu ds.
\end{align*}
From the contractivity of $T_\Om(t)$ in $L^\infty(\Omega,\nu)$, for any $t>0$ we deduce
\begin{align}\label{corona}
\int_\Om g(T_\Om(t)u-u)d\nu &\leq \sqrt{K_2} \norm{g}_{L^\infty(\Om,\nu)}
\int_0^t s^{-\frac{1}{2}}\int_\Om|D_H u|_H  d\nu ds\notag\\
&=2\sqrt{K_2 t}\norm{g}_{L^\infty(\Om,\nu)}\int_\Om  |D_H u|_H  d\nu .
\end{align}
For $u\in BV(\Om, \nu)$, from Proposition \ref{densita} we get a sequence $u_n\in D^{1,2}(\Om, \nu)$ converging
to $u$ in $L^2(\Om, \nu)$ with $\lim_{n \to \infty}\int_\Om |Du_n|_H d\nu=|D_\nu u|(\Om)$. Thus, putting
$u_n$ in place of $u$ in \eqref{corona} and letting $n \to \infty$ we get
\begin{align*}
\int_\Om g(T_\Om(t)u-u)d\nu &\leq 2\sqrt{K_2t}\norm{g}_{L^\infty(\Om,\nu)} |D_\nu u|(\Om), \qquad\;\,g\in L^\infty(\Om,\nu).
\end{align*}
Finally, taking the supremum on the $g\in L^\infty(\Omega,\nu)$ with $\|g\|_\infty\leq 1$ we obtain
\begin{align*}
\int_\Om |T_\Om(t)u-u|d\nu &\leq 2\sqrt{K_2t} |D_\nu u|(\Om)
\end{align*}
whence \eqref{est*} follows.
\end{proof}

\begin{remark}{\rm
Note that condition \eqref{cond-nec} is equivalent to
\begin{equation*}
\limsup_{t \to 0^+}\frac{1}{\sqrt t}\int_{E^c\cap \Om} (T_\Om(t)\chi_E)d\nu<\infty.
\end{equation*}
Indeed, $|T_\Om(t)\chi_E-\chi_E|= (\chi_E-T_\Om(t)\chi_E)\chi_E+ (T_\Om(t)\chi_E-\chi_E)\chi_{E^c}$.
The invariance of $T_\Om(t)$ with respect to $\nu$ in $\Omega$ yields
\[\int_\Om (\chi_E-T_\Om(t)\chi_E)\chi_Ed\nu=\int_\Om (T_\Om(t)\chi_E- \chi_ET_\Om(t)\chi_E)d\nu=\int_\Om \chi_{E^c}T_\Om(t)\chi_E d\nu.\]
Consequently,
\begin{align*}
\int_\Om |T_\Om(t)\chi_E-\chi_E|d\nu = 2 \int_{\Om\cap E^c}T_\Om(t)\chi_E d\nu
\end{align*}}
\end{remark}

Now, we prove a quasi converse of Theorem \ref{thm_nec}. We start with a preliminary result for
bounded functions.

\begin{proposition}\label{cond_suff}
Under Hypotheses $\ref{hyp_base}$, $\ref{ipotesi peso}$, $\ref{ipo_convex}$ and $\ref{hyp_d}$, let
$u\in L^\infty(X,\nu)$ be such that
\begin{equation}\label{cond_suff*}
\liminf_{t \to 0^+}\frac{1}{\sqrt{t}}\int_\Om\int_X|u(e^{-t}x+\sqrt{1-e^{-2t}y})-u(x)|d\gamma(y)d\nu(x) =
C < \infty.
\end{equation}
Then $u \in BV(\Om,\nu)$ and $|D_\nu u|(\Om)\le C\|Q_\infty^{1/2}\|_{\mathcal{L}(X)}\sqrt{\pi}/2$.
\end{proposition}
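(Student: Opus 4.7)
The plan is to bound the variation $V_\nu(u,\Om)$ directly, which by \eqref{mis=var} is equivalent to showing $\int_\Om u\,{\rm div}_\nu g\,d\nu\le C\|Q_\infty^{1/2}\|_{\mathcal{L}(X)}\sqrt{\pi}/2$ uniformly over $g\in{\rm Lip}_c(\Om;F)$ with $F\subseteq H$ finite-dimensional and $\|g\|_\infty\le 1$. Extending every such $g$ by zero to a bounded Lipschitz function on $X$, I aim to test the integrand in the hypothesis against a bounded, sign-type weight built from $g$ so that Gaussian integration by parts extracts an $H$-directional derivative of $u$ along $g$.

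Concretely, for $t>0$ I would consider
\[
J(t):=\int_\Om\!\int_X[u(e^{-t}x+\sqrt{1-e^{-2t}}y)-u(x)]\,{\rm sgn}\langle y,\zeta(x)\rangle_X\,d\gamma(y)\,d\nu(x),
\]
with $\zeta(x):=Q_\infty^{-1}g(x)$, initially well defined when $g(x)$ lies in the dense subspace $Q_\infty(X)\subset H$ and then extended by a density argument. Since the weight is bounded by $1$, the hypothesis gives $\limsup_{t\to 0^+}|J(t)|/\sqrt{t}\le C$. To evaluate the same limit from the other side, I would smooth $u$ by $u_s:=S(s)u$, which lies in $D^{1,p}(X,\gamma)$ for every $s>0$ and every $p\ge 1$, and perform a first-order Taylor expansion of $u_s(e^{-t}x+\sqrt{1-e^{-2t}}y)-u_s(x)$ in $\sqrt{t}$. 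Combined with the Gaussian identity
\[
\E[\xi\,{\rm sgn}(\eta)]=\sqrt{2/\pi}\,{\rm Cov}(\xi,\eta)\big/\sqrt{{\rm Var}(\eta)}\quad\text{($\xi,\eta$ mean-zero jointly Gaussian)},
\]
applied to $\xi=\langle y,Du_s(x)\rangle_X$ and $\eta=\langle y,\zeta(x)\rangle_X$, one identifies $\lim_{t\to 0^+}J(t)/\sqrt{t}$ with a multiple of $\int_\Om\langle D_Hu_s,g\rangle_H/|g|_H\,d\nu$. An integration by parts against the compactly supported $g$ and the strong $L^1$-continuity $S(s)u\to u$ as $s\to 0^+$ then recover $\int_\Om u\,{\rm div}_\nu g\,d\nu$ up to the absolute factor $2/\sqrt{\pi}$.

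The main obstacle is reconciling the pointwise factor $1/|g|_H$ with the linear duality pairing $\int_\Om u\,{\rm div}_\nu g\,d\nu$, together with tracing where $\|Q_\infty^{1/2}\|_{\mathcal{L}(X)}$ enters the final constant. This factor emerges from the worst-case Gaussian moment bound $\int_X|\langle y,v\rangle_X|\,d\gamma(y)=|Q_\infty^{1/2}v|_X\sqrt{2/\pi}\le\|Q_\infty^{1/2}\|\sqrt{2/\pi}$ for unit $v\in X$, which governs the comparison passing from sign-valued tests to the duality pairing. Rigorous justification of the double limit $t,s\to 0^+$ for $u$ only in $L^\infty$ relies on the OU gradient estimate $|D_HS(s)u|_H^2\le K_2 s^{-1}S(s)|u|^2$ from Proposition~\ref{main_properties}(iii) and the dominated convergence theorem, carefully applied so as to respect the dependence of $\zeta(x)$ on $g$ and the density approximation $g\mapsto Q_\infty g_n$ used to make $\zeta=Q_\infty^{-1}g$ meaningful.
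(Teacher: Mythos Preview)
The obstacle you yourself flag is genuine and your outline does not close it. Testing against ${\rm sgn}\langle y,\zeta(x)\rangle_X$ and applying the Gaussian identity produces, after the Taylor step, a quantity of the form
\[
\sqrt{\tfrac{2}{\pi}}\int_\Om\frac{\langle Q_\infty Du_s(x),\zeta(x)\rangle_X}{|Q_\infty^{1/2}\zeta(x)|_X}\,d\nu(x),
\]
which is nonlinear in $g$ through the denominator and does not collapse to the linear pairing $\int_\Om u\,{\rm div}_\nu g\,d\nu$. There is no evident way to remove that denominator while keeping a weight bounded by $1$, and a bounded weight is exactly what you need in order to dominate $|J(t)|$ by the integrand in \eqref{cond_suff*}. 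Two further issues: the hypothesis is a $\liminf$, so $\limsup_{t\to0^+}|J(t)|/\sqrt t\le C$ is not available (you must work along a minimising sequence $t_n\downarrow0$); and $\zeta=Q_\infty^{-1}g$ is only defined when $g$ takes values in $Q_\infty(X)$, a strict subspace of $H$, so the density argument you allude to has to be carried through the entire double limit $t,s\to0^+$, which you do not indicate how to control.

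The paper avoids the duality route entirely. In Step~1 it proves, for smooth $v\in C^1_b(X)$, the \emph{exact} limit
\[
\lim_{t\to0^+}\frac{1}{\sqrt t}\int_\Om\!\int_X\bigl|v(e^{-t}x+\sqrt{1-e^{-2t}}\,y)-v(x)\bigr|\,d\gamma(y)\,d\nu(x)=\frac{2}{\sqrt\pi}\int_\Om|Dv|\,d\nu,
\]
via a Gaussian rotation $(x,y)\mapsto R_r(x,y)$ and a weak$^*$ lower-semicontinuity argument for the family of functionals $L_t\varphi:=t^{-1/2}\iint\varphi\cdot(v(\cdots)-v)\,d\gamma\,d\nu$ on $C_b(X\times X)$. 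In Step~2 it approximates $u$ by $u_j\in C^1_b(X)$, uses the pointwise inequality
\[
|D_Hu_j|_H=|Q_\infty Du_j|_H=|Q_\infty^{1/2}Du_j|_X\le\|Q_\infty^{1/2}\|_{\mathcal L(X)}\,|Du_j|_X
\]
(this is precisely where the constant $\|Q_\infty^{1/2}\|$ enters), and combines it with invariance and the gradient estimate \eqref{gra-est} to bound $\int_\Om|D_HT_\Om(\sigma)u_j|_H\,d\nu$ uniformly in $j$ and $\sigma>0$. Letting $j\to\infty$ and then $\sigma\to0^+$, the already-proved De~Giorgi characterisation \eqref{DeG} of Theorem~\ref{bv_dg} converts this uniform bound directly into $u\in BV(\Om,\nu)$ with $|D_\nu u|(\Om)\le C\|Q_\infty^{1/2}\|\sqrt\pi/2$. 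The tool you are missing is Theorem~\ref{bv_dg}: once it is in hand there is no need to test against individual $g$'s at all.
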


\begin{proof}
We divide the proof in two steps. \\
{\bf Step 1.}
Here we prove that for any $v \in C^1_b(X)$, it holds that
\begin{align}\label{aim1}
\lim_{t \to 0^+}\frac{1}{\sqrt{t}}\int_\Om\int_X&|v(e^{-t}x+\sqrt{1-e^{-2t}y})-v(x)|d\gamma(y)d\nu(x)
\notag\\
&=\frac{2}{\sqrt{\pi}}\int_{\Om}|D v(x)|d\nu(x).
\end{align}
To this aim, we observe that
\begin{align*}
&K_v(t):=\int_\Om\int_X|v(e^{-t}x+\sqrt{1-e^{-2t}y})-v(x)|d\gamma(y)d\nu(x)\\
&=\int_\Om\int_X\left|\int_0^t\frac{d}{dr}v(e^{-r}x+\sqrt{1-e^{-2r}y})dr\right|d\gamma(y)d\nu(x)\\
&=\int_\Om\!\int_X\!\Bigl|\int_0^t\!\langle D v(e^{-r}x+\sqrt{1-e^{-2r}y}),-e^{-r}x+\frac{e^{-2r}}
{\sqrt{1-e^{-2r}}}y\rangle dr\Bigr|d\gamma(y)d\nu(x)
\\
& \le \int_0^t \frac{e^{-r}}{\sqrt{1-e^{-2r}}} \cdot
\\
&\quad\cdot\int_\Om\int_X \abs{\gen{D v(e^{-r}x+\sqrt{1-e^{-2r}y}),-\sqrt{1-e^{-2r}}x+e^{-r}y}}d\gamma(y)d\nu(x).
\end{align*}
Now, for $r$ fixed we perform the ``Gaussian rotation''
\begin{equation*}
(x,y)\mapsto R_r(x,y):=(e^{-r}x+\sqrt{1-e^{-2r}y},-\sqrt{1-e^{-2r}}x+e^{-r}y)=:(u,w)
\end{equation*}
to get, thanks to the invariance of $\gamma$ under $R$, 
\begin{align*}
K_v(t)&\le\int_0^t \frac{e^{-r}}{\sqrt{1-e^{-2r}}}\int_X\int_X |\gen{D v(u),w}|\cdot
\\
&\cdot\chi_{\Om}(e^{-r}u-
\sqrt{1- e^{-2r}}w)e^{-U(e^{-r}u-\sqrt{1- e^{-2r}}w)}d\gamma(u)d\gamma(w)\\
&=:\int_X\int_X f_v(t,u,w)d\gamma(w)d\gamma(u).
\end{align*}
We claim that
\[
\lim_{t \to 0^+}\frac{1}{\sqrt{t}}\int_X\int_X f_v(t,u,w)d\gamma(w)d\gamma(u)=C\int_\Om|D v|d\nu
\]
Indeed, by the convexity of $U$ there exist $z\in X$ and $a\in\R$ such that $U(x)\geq \gen{x,z}+a$,
hence
\begin{align*}
&\frac{1}{\sqrt{t}}f_v(t,u,w)\le
\frac{1}{\sqrt{t}}|D v(u)||w|e^{|\gen{z,u}|+|\gen{z,w}|+|a|}\int_{0}^t\frac{1}{\sqrt{2r}}dr
\\
&=\sqrt{2}|D v(u)||w|e^{|\gen{z,u}|+|\gen{z,w}|+|a|}\in L^1(X\times X, \gamma \otimes\gamma),
\qquad\;\, t \in (0,1)
\end{align*}
and, using De L'H\^opital's rule, for almost every $(u,w)\in\Om\times X$
\begin{align*}
\lim_{t \to 0^+}\frac{f_v(t,u,w)}{\sqrt{t}}=\sqrt{2}\chi_{\Om}(u)\gen{D v(u),w}e^{-U(u)}.
\end{align*}
So by the dominated convergence theorem we obtain
\begin{align}\label{mah1}
&\limsup_{t\ra0^+}\frac{1}{\sqrt{t}}\int_\Om\int_X|v(e^{-t}x+\sqrt{1-e^{-2t}y})-v(x)|d\gamma(y)d\nu(x)
\notag\\
&\leq \frac{2}{\sqrt{\pi}}\int_\Om|D v(u)|d\nu(u)
\end{align}
where we used that $\int_X|\gen{D v(u),w}|d\gamma(w) = \sqrt{2/\pi}|D v(u)|$. Indeed, using the factorisation $\gamma = \gamma_1\otimes\gamma^\perp$, where $\gamma_1$ is
the 1-dimensional standard Gaussian measure on $E={\rm span}Dv(u)$, we get
\begin{align*}
&\int_X|\gen{D v(u),w}|d\gamma(w) = 2 \int_{\{w:\gen{D v(u),w}>0\}} \gen{D v(u),w} d\gamma(w)
\\
&=2 |D v(u)| \int_{E^\perp}\int_0^\infty t d\gamma_1(t)d\gamma(w') = \sqrt{2/\pi}|D v(u)|.
\end{align*}
To conclude, consider the family of linear functionals $L_t: C_b(X\times X)\ra\R$, $t \in (0,1)$
\[
L_t\varphi=\frac{1}{\sqrt{t}}\int_\Om\int_X\varphi(x,y)(v(e^{-t}x+\sqrt{1-e^{-2t}}y)-v(x))d\gamma(y)d\nu(x).
\]
By \eqref{mah1} we get $\limsup_{t\ra 0^+}\norm{L_t}\leq 2(\sqrt{\pi})^{-1}\|D v\|_{L^1(\Om,\nu)}$
and arguing as above
\begin{align*}
\lim_{t\ra 0^+}L_t\varphi=\sqrt{2}\int_\Om\int_X\varphi(x,y)\gen{D u(x),y}d\gamma(y)d\nu(x)=:L_0\varphi
\end{align*}
So $L_t$ weakly$^*$ converges to $L_0$ as $t \to 0^+$ and, by lower semicontinuity of the norm
we get \eqref{aim1}:
\[
\norm{L_0}=\frac{2}{\sqrt{\pi}}\int_\Om|D v(x)|d\nu(x)\leq\liminf_{t\ra 0^+}\norm{L_t}
\leq\limsup_{t\ra0^+}\norm{L_t}\leq \frac{2}{\sqrt{\pi}}\int_\Om|D v(x)|d\nu(x).
\]
{\bf Step 2.}
For $u\in L^\infty(X, \nu)$, let $(u_j)_{j\in\N}\subseteq C^1_b(X)$ be such that $u_j\to u$ in $L^2(X, \nu)$,
almost everywhere in $X$ and satisfying \eqref{cond_suff*} (thanks to the dominated convergence theorem). Using \eqref{aim1},
\eqref{invariance} and \eqref{gra-est} we have
\begin{align}\label{simo}
&\lim_{t\to 0^+}\frac{K_{u_j}(t)}{\sqrt{t}}
=\frac{2}{\sqrt{\pi}}\int_{\Om}|D u_j|d\nu
=\frac{2}{\sqrt{\pi}\|Q_\infty^{1/2}\|_{\mathcal{L}(X)}}\int_{\Om}\|Q_\infty^{1/2}\|_{\mathcal{L}(X)}|D u_j|d\nu\notag\\
&\geq\frac{2}{\sqrt{\pi}\|Q_\infty^{1/2}\|_{\mathcal{L}(X)}}\int_{\Om}|Q_\infty^{1/2} D u_j|d\nu
=\frac{2}{\sqrt{\pi}\|Q_\infty^{1/2}\|_{\mathcal{L}(X)}}\int_{\Om}|Q_\infty D u_j|_Hd\nu
\notag \\
&=\frac{2}{\sqrt{\pi}\|Q_\infty^{1/2}\|_{\mathcal{L}(X)}}\int_{\Om}|D_H u_j|_Hd\nu
=\frac{2}{\sqrt{\pi}\|Q_\infty^{1/2}\|_{\mathcal{L}(X)}}\int_{\Om}(T_\Om(\sigma)|D_H u_j|_H)d\nu
\notag \\
&\ge \frac{2}{\sqrt{\pi}\|Q_\infty^{1/2}\|_{\mathcal{L}(X)}}e^{\sigma \lambda_1^{-1}}
\int_{\Om}|D_H T_\Om(\sigma) u_j|_Hd\nu
\end{align}
for any $\sigma \in (0,1)$. Now, since the left hand side of \eqref{simo} is uniformly bounded from above by 
the consatnt $C$, the $L^1$-norm of $D_HT_\Om(\sigma)u_j$ is bounded as well by the same constant for every 
$j \in \N$ and $\sigma \in (0,1)$, i.e.,
\[
e^{\frac{\sigma}{\lambda_1}}\int_{\Om}|D T_\Om(\sigma) u_j|d\nu
\le C\frac{\sqrt{\pi}}{2}\|Q_\infty^{1/2}\|_{\mathcal{L}(X)},\qquad j\in \N,\sigma>0.
\]
Thus, recalling that $D_H T_\Om(\sigma) u_j$ converges to $D_H T_\Om(\sigma) u$ in $L^1(\Om, \nu)$ as $j\to \infty$ (see
\eqref{gra-est1}), letting first $j \to \infty$ and then $\sigma \to 0^+$ and using formula \eqref{DeG} we get that
$|D_\nu u|(\Om)\le C\|Q_\infty^{1/2}\|_{\mathcal{L}(X)}\sqrt{\pi}/2$.
\end{proof}

The following result is a quasi converse of Theorem \ref{thm_nec}. In fact, we give a sufficient condition
to have $P_\nu(E,\Omega)<\infty$ in terms of the short-time behaviour of $T(t)$, where $T(t)$ is the semigroup generated by the operator $L$ defined in \eqref{op-L} in $L^2(X,\nu)$.  

and not of $T_\Om(t)$.

\begin{theorem}
Under Hypotheses $\ref{hyp_base}$, $\ref{ipotesi peso}$, $\ref{ipo_convex}$ and $\ref{hyp_d}$, if $E\in{\mathcal B}(X)$ and
\begin{equation}\label{cond_11}
C:=\liminf_{t \to 0^+}\frac{1}{\sqrt{t}}\|T(t)\chi_E- \chi_E\|_{L^1(\Om,\nu)}<\infty,
\end{equation}
then $P_\nu(E, \Om)\le C\|Q_\infty^{1/2}\|_{\mathcal{L}(X)}\sqrt{\pi}/2$.
\end{theorem}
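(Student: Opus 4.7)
The natural target is Proposition \ref{cond_suff}, applied to $u=\chi_E$. The plan is therefore to replace the perturbed semigroup $T(t)$ appearing in the hypothesis \eqref{cond_11} by the Mehler semigroup $S(t)$, and then to exploit a structural simplification available only for indicator functions. The key observation is the identity
\[
\|S(t)\chi_E-\chi_E\|_{L^1(\Om,\nu)}=\int_\Om\int_X|\chi_E(e^{-t}x+\sqrt{1-e^{-2t}}y)-\chi_E(x)|d\gamma(y)d\nu(x),
\]
which follows from the Mehler formula \eqref{Mehler} because, for fixed $x$, the integrand $\chi_E(e^{-t}x+\sqrt{1-e^{-2t}}y)-\chi_E(x)$ is valued in $\{-\chi_E(x),1-\chi_E(x)\}$, hence has constant sign in $y$. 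Once this identity is in place, applying Proposition \ref{cond_suff} with $u=\chi_E$ will yield both $\chi_E\in BV(\Om,\nu)$ and the perimeter estimate.

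\textbf{Relating $T(t)$ and $S(t)$.} To transfer the hypothesis from $T(t)$ to $S(t)$, I would use the Duhamel formula already written (and verified) in the final portion of the proof of Theorem \ref{thm_nec}, namely
\[
T(t)\chi_E-\chi_E=(S(t)\chi_E-\chi_E)-\int_0^t S(t-\sigma)\langle D_H U,D_H T(\sigma)\chi_E\rangle_H\,d\sigma,
\]
which holds $\nu$-a.e.\ and in $L^1(\Om,\nu)$. The remainder term $R(t):=\int_0^t S(t-\sigma)\langle D_H U,D_H T(\sigma)\chi_E\rangle_H\,d\sigma$ will be estimated by means of the pointwise gradient bound \eqref{gra-est1} (valid also for $T(t)$ on $X$, by the remark after Proposition \ref{main_properties}), which gives $|D_H T(\sigma)\chi_E|_H\le \sqrt{K_2}\,\sigma^{-1/2}$. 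Combining this with Cauchy--Schwarz, the contractivity of $S(t)$ on $L^2(X,\gamma)$, the Hölder inequality to pass from $\nu=e^{-U}\gamma$ to $\gamma$, and $D_H U\in L^2(X,\gamma;H)$ (by Hypothesis \ref{ipotesi peso}, together with Fernique's theorem to ensure $e^{-U/2}\in L^2(X,\gamma)$), I expect a bound of the form $\|R(t)\|_{L^1(\Om,\nu)}\le K'\sqrt{t}$ for a constant $K'$ depending only on $U$ and $K_2$.

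\textbf{Conclusion and main obstacle.} Combining the Duhamel identity with the estimate on $R(t)$ and \eqref{cond_11}, one obtains $\liminf_{t\to 0^+}\frac{1}{\sqrt t}\|S(t)\chi_E-\chi_E\|_{L^1(\Om,\nu)}<\infty$; plugging this into Proposition \ref{cond_suff} through the indicator-function identity above then yields $\chi_E\in BV(\Om,\nu)$ together with a quantitative upper bound on $P_\nu(E,\Om)$ of the form $\tfrac{\sqrt\pi}{2}\|Q_\infty^{1/2}\|_{\mathcal L(X)}$ times the corresponding liminf. The main obstacle is obtaining the precise constant $C$ stated in the theorem: the naive bound $\|R(t)\|_{L^1(\Om,\nu)}=O(\sqrt t)$ would add an extra additive constant coming from $K'$. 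To recover the sharp constant I would need to show that $\|R(t)\|_{L^1(\Om,\nu)}=o(\sqrt t)$; a plausible route is to split the time integral as $\int_0^\delta+\int_\delta^t$ and bootstrap using the improved gradient estimate \eqref{est}, after first establishing that $\chi_E\in BV(\Om,\nu)$ with a non-sharp constant and then refining.
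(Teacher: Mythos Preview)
Your approach is essentially the same as the paper's: reduce to Proposition~\ref{cond_suff} via the constant-sign identity for indicators, then pass from $T(t)$ to $S(t)$ through the Duhamel formula \eqref{form_var_cost}, bounding the remainder with the pointwise gradient estimate \eqref{gra-est1}, the contractivity of $S(t)$ on $L^2(X,\gamma)$, and the integrability of $D_HU$ from Hypothesis~\ref{ipotesi peso}. The paper carries this out and obtains, exactly as you predict, $H(t):=\|R(t)\|_{L^1(\Om,\nu)}\le 2\sqrt{K_2 t}\,\|e^{-U}\|_{L^2(X,\gamma)}\|D_HU\|_{L^2(X,\gamma;H)}$, i.e.\ $H(t)=O(\sqrt t)$, and then declares that conditions \eqref{cond_1} and \eqref{cond_11} are ``equivalent''.

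You have actually been more careful than the paper on the point you flag as the ``main obstacle''. With only $H(t)=O(\sqrt t)$, the two-sided inequality \eqref{diff} yields merely that the liminf $L$ for $S(t)$ and the liminf $C$ for $T(t)$ are simultaneously finite; it does \emph{not} give $L\le C$, so the sharp bound $P_\nu(E,\Om)\le C\,\|Q_\infty^{1/2}\|_{\mathcal L(X)}\sqrt\pi/2$ does not follow from the argument as written---one gets at best $P_\nu(E,\Om)\le (C+K')\,\|Q_\infty^{1/2}\|_{\mathcal L(X)}\sqrt\pi/2$. The paper's proof stops at ``the proof is complete'' without closing this gap. Your proposed bootstrap (establish $\chi_E\in BV$ with a non-sharp constant, then upgrade the remainder to $o(\sqrt t)$) is a reasonable idea, but note that \eqref{est} is stated for $T_\Om(t)$, not for $T(t)$ on $X$, so you would first need its analogue on the whole space; this is available (the same penalisation argument gives it), but it is an extra step the paper does not supply either.
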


\begin{proof}
Choosing $u=\chi_E$ in \eqref{cond_suff*} and observing that
$$
\int_\Om\left|\int_X f(x,y)d\gamma(y)\right|d\nu(x)=\int_\Om\int_X |f(x,y)|d\gamma(y)d\nu(x)
$$
for any $f$ with constant sign, from Proposition \ref{cond_suff} we deduce that if
\begin{equation}\label{cond_1}
L:=\liminf_{t \to 0^+}\frac{1}{\sqrt{t}}\|S(t)\chi_E- \chi_E\|_{L^1(\Om,\nu)}<\infty
\end{equation}
then $P_\nu(E, \Om)\le L\|Q_\infty^{1/2}\|_{\mathcal{L}(X)}\sqrt{\pi}/2$. Here $S(t)$ is the Ornstein--Uhlenbeck
semigroup in \eqref{Mehler}. To conclude we prove that condition \eqref{cond_1} is equivalent to \eqref{cond_11}. 
From the variation-of-constants formula we deduce
\begin{equation}\label{form_var_cost}
(T(t)g)(x) = (S(t)g)(x) -\int_0^t (S(t-\sigma)\langle D_H U, D_H T(\sigma)g\rangle_H)(x) d\sigma,
\end{equation}
for every $g\in \mathcal{F}C_b(X)$, $\nu$-a.e. $x\in X$ and any $t\ge 0$. To prove \eqref{form_var_cost}
it suffices that the map
$\sigma\mapsto S(t-\sigma)\langle D_H U, D_H T(\sigma)g\rangle_H$
belongs to $L^1((0,t))$ for any $t>0$.
To this aim, let us observe that
\begin{equation*}
\int_X\int_0^t S(t-\sigma)\langle D_H U, D_H T(\sigma)g\rangle_H d\sigma d\nu<\infty
\end{equation*}
for any $g\in\mathcal{F}C_b(X)$. Indeed, the H\"older inequality and the contractivity of $S(t)$ in $L^2(X,\gamma)$
allow us to write
\begin{align}\label{giro}
\int_X & \int_0^t S(t-\sigma)\langle D_H U, D_H T(\sigma)g\rangle_H d\sigma d\nu
\notag\\
&= \int_0^t\int_X S(t-\sigma)\langle D_H U, D_H T(\sigma)g\rangle_H d\nu d\sigma
\notag\\
& \le \int_0^t\|S(t-\sigma)\langle D_H U, D_H T(\sigma)g\rangle_H\|_{L^1(X, \nu)}  d\sigma
\notag\\
& \le \|e^{-U}\|_{L^2(X,\gamma)} \int_0^t\|S(t-\sigma)\langle D_H U, D_H T(\sigma)g\rangle_H\|_{L^2(X, \gamma)}  d\sigma
\notag\\
& \le \|e^{-U}\|_{L^2(X,\gamma)}\int_0^t\|\langle D_H U, D_H T(\sigma)g\rangle_H\|_{L^2(X, \gamma)}  d\sigma
\notag\\
& \le \sqrt{K_2}\|e^{-U}\|_{L^2(X,\gamma)}\|g\|_\infty\|D_H U\|_{L^2(X,\gamma;H)}\int_0^t \sigma^{-1/2} ds
\notag\\
&= 2\sqrt{K_2 t}\|e^{-U}\|_{L^2(X,\gamma)}\|g\|_\infty\|D_H U\|_{L^2(X,\gamma;H)}
\end{align}
where in the last line we used estimate \eqref{gra-est1} which holds true even in the case $\Om=X$ and $T_\Om(t)$  
replaced by $T(t)$. Hence, formula \eqref{form_var_cost} follows.

Now, integrating \eqref{form_var_cost} in $\Om$ with respect to $\nu$ yields
\begin{align}\label{diff}
\|S(t)\chi_E-\chi_E\|_{L^1(\Om, \nu)}- H(t)&\le \|T(t)\chi_E-\chi_E\|_{L^1(\Om, \nu)}
\notag \\
&\le \|S(t)\chi_E-\chi_E\|_{L^1(\Om, \nu)}+ H(t)
\end{align}
for any $t>0$ with
\[H(t):= \left|\int_{X}\int_0^t S(t-s)\langle D_H U,D_H T(s)\chi_E\rangle_H ds d\nu\right|, \qquad\;\, t>0.\]
Using estimate \eqref{giro} with $g=\chi_E$ we infer that $\limsup_{t\to 0^+}\frac{H(t)}{\sqrt{t}}<\infty$.
This last estimate, together with \eqref{diff}, prove that \eqref{cond_1} is equivalent to \eqref{cond_11} and the proof 
is  complete.
\end{proof}

\end{document}